\theoremstyle{plain}             
\newtheorem{theorem}{Theorem}[section]
\newtheorem{lemma}[theorem]{Lemma}
\newtheorem{remark}[theorem]{Remark}
\definecolor{gray}{rgb}{0.9,0.9,0.9}
\definecolor{green}{rgb}{0.0,0.6,0.0}
\newcommand{\KL}{Karhunen-Lo\`eve\ }
\newcommand{\E}{\mathbb{E}}
\newcommand{\V}{\mathbb{V}}
\newcommand{\Cov}{\operatorname{Cov}}
\renewcommand{\div}{\operatorname{div}}
\newcommand{\Int}{\operatorname{Int}}
\renewcommand{\d}{\operatorname{d}\!}
\newcommand{\isdef}{\mathrel{\mathrel{\mathop:}=}}
\newcommand{\balpha}{{\boldsymbol{{\alpha}}}}
\newcommand{\bgamma}{{\boldsymbol{\gamma}}}
\newcommand{\brho}{{\boldsymbol{\rho}}}
\newcommand{\bs}[1]{{\boldsymbol{#1}}}
\begin{document}
\title[Multilevel quadrature for elliptic parametric PDEs]{Multilevel quadrature for elliptic parametric 
partial differential equations in case of polygonal approximations
of curved domains}

\author{Michael Griebel}
\address{Michael Griebel,
Institut f\"ur Numerische Simulation, Universit\"at Bonn, Endenicher Allee 19b, 53115 Bonn, Deutschland und
Fraunhofer Institute for Algorithms and Scientific Computing (SCAI), Schloss Birlinghoven, 53754 Sankt Augustin, Deutschland}
\email{griebel@ins.uni-bonn.de}
\author{Helmut Harbrecht}
\address{Helmut Harbrecht,
Departement Mathematik und Informatik, Universit\"at Basel, Spiegelgasse 1, 4051 Basel, Schweiz\\
}
\email{helmut.harbrecht@unibas.ch}
\author{Michael D.\ Multerer}
\address{Michael D.\ Multerer,
Institute of Computational Science, Universit\`a della Svizzera italiana, 
Via Giuseppe Buffi 13, 6900 Lugano, Svizzera\\
}
\email{michael.multerer@usi.ch}
%
%
\maketitle
\begin{abstract}
Multilevel quadrature methods for parametric
operator equations such as the multilevel (quasi-) 
Monte Carlo method are closely related to the sparse 
tensor product approximation between the spatial
variable and the parameter. In this article,
we employ this fact and reverse the multilevel quadrature
method via the sparse grid construction by applying differences 
of quadrature rules to finite element discretizations of increasing 
resolution. Besides being algorithmically more efficient if the 
underlying quadrature rules are nested, this way of performing 
the sparse tensor product approximation enables the easy use 
of non-nested and even adaptively refined finite element meshes. 
Especially, we present a rigorous error and regularity analysis 
of the fully discrete
solution, taking into account the effect of polygonal approximations
to a curved physical domain and the numerical approximation of
the bilinear form. Our results facilitate the construction of
efficient multilevel quadrature methods based on deterministic
quadrature rules. Numerical results in three
spatial dimensions are provided to illustrate the approach.
\end{abstract}

\section{Introduction}
The present article is concerned with the numerical solution 
of elliptic parametric second order boundary value problems of the form
\begin{equation}\label{eq:modprob1}
-\div\big(a({\bs y})\nabla u({\bs y})\big)=f({\bs y})\text{ in }D,
\quad u({\bs y})=0\text{ on }\partial D,\quad {\bs y}\in{{\Gamma}},
\end{equation}
where \(D\subset\mathbb{R}^d\) denotes the spatial domain 
and \({{\Gamma}}\subset\mathbb{R}^m\) denotes the parameter domain.
Prominent representatives of such problems arise from recasting 
boundary value problems with random data, like random diffusion 
coefficients, random right hand sides and even random domains.
A high-dimensional parametric boundary value problem of the 
form \eqref{eq:modprob1} is then derived by inserting the 
truncated \KL expansion of the random data, 
see e.g.\  \cite{BNT,BTZ,FST,HPS14b,MK}. 
Hence, the computation of quantities of interest 
amounts to a high-dimensional 
Bochner integration problem. The latter can be dealt with by quadrature 
methods. Since every quadrature method requires the repeated evaluation 
of the integrand for different sample or quadrature points,
we have to compute the solution to \eqref{eq:modprob1} with respect to
many different values of the parameter \({\bs y}\in{{\Gamma}}\).

An efficient approach to deal with the quadrature problem is 
the multilevel Monte Carlo method (MLMC), which has been 
developed in \cite{BSZ,G,GW,H,H2}. As first observed in 
\cite{GH,HPS1}, this approach mimics a certain sparse grid 
approximation between the physical space and the parameter
space. Thus, the extension to the multilevel quasi-Monte Carlo (MLQMC)
method and even more general multilevel quadrature methods is
obvious. In this article, we focus on such deterministic quadrature 
methods, which, in particular, require extra regularity of 
the solution in terms of spaces of dominant mixed derivatives, 
cf.\ \cite{HPS1,HPS2,KSS3} for example. This extra regularity is 
available for important classes of parametric problems, see
{\cite{CDS,CDS11}} for the case of affine elliptic diffusion coefficients 
and \cite{HS11} for the case of log-normally distributed diffusion 
coefficients. For the sake of clarity in presentation,
we shall focus here on affine elliptic diffusion problems as they occur from 
the discretization of uniformly elliptic random diffusion coefficients.
We put our emphasis on the rigorous error and regularity analysis of 
the fully discrete solution, taking into account the effect of polygonal 
approximations to a curved physical domain and the numerical 
approximation of the bilinear form.

In addition, we focus on a particular construction of the
multilevel quadrature, which is very well suited for the 
use with black-box finite element solvers and adaptive
mesh refinements:
Taking the fact that a multilevel quadrature scheme
resembles a sparse tensor product approximation 
between the spatial variable
and the parametric variable as a starting point, we
have the following abstract framework. Let
\[
  V_0^{(i)} \subset V_1^{(i)}\subset\cdots\subset V_j^{(i)}
  	\subset\cdots\subset\mathcal{H}_i,\quad i = 1,2,
\]
denote two sequences of finite dimensional sub-spaces with 
increasing approximation power in some linear spaces 
$\mathcal{H}_i$. To approximate a given object of the tensor
product space $\mathcal{H}_1\otimes\mathcal{H}_2$, one 
canonically considers the {full tensor product spaces} 
$U_j\isdef V_j^{(1)}\otimes V_j^{(2)}$. However, the cost $\dim 
U_j = \dim V_j^{(1)}\cdot \dim V_j^{(2)}$ is often too 
expensive. To reduce this cost, one might consider
the approximation in so-called {\em sparse grid spaces}, 
see e.g.\ \cite{BG}. For $\ell\ge 0$, one introduces the 
complement spaces
\[
  W_{\ell+1}^{(i)} = V_{\ell+1}^{(i)}\ominus V_\ell^{(i)},
  	\quad i=1,2,
\]
which gives rise to the multilevel decompositions
\begin{equation}	\label{eq:ml}
  V_j^{(i)} = \bigoplus_{\ell=0}^j W_\ell^{(i)}, \quad W_0^{(i)}:=V_0^{(i)},  	\quad i=1,2.
\end{equation}
Then, the sparse grid space is defined by
\begin{equation}	\label{eq:sg}
  \widehat{U}_j := \bigoplus_{\ell+\ell'\le j} W_\ell^{(1)}\otimes W_{\ell'}^{(2)}.
\end{equation}
Under the assumptions that the dimensions of 
$\big\{V_\ell^{(1)}\big\}$ and $\big\{V_\ell^{(2)}\big\}$ 
form geometric series, \eqref{eq:sg} contains, at most up 
to a logarithm, only $\mathcal{O}\big(\max\big\{\dim V_j^{(1)}, 
\dim V_j^{(2)}\big\}\big)$ degrees of freedom. Nevertheless, it 
offers nearly the same approximation power as $U_j$ provided 
that the object to be approximated has some extra smoothness 
by means of mixed regularity. For further details, see \cite{GH1}.

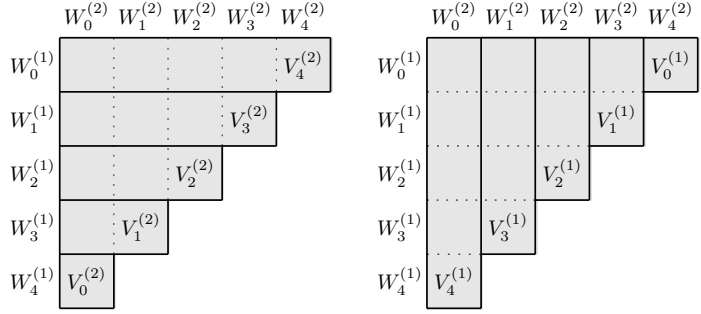
\begin{figure}
\begin{center}
\scalebox{0.8}{
\setlength{\unitlength}{0.9cm}
\begin{picture}(6,6)
  \put(1,4.1){\colorbox{gray}{\makebox(4.8,0.8){}}}
  \put(1,3.1){\colorbox{gray}{\makebox(3.8,0.8){}}}
  \put(1,2.1){\colorbox{gray}{\makebox(2.8,0.8){}}}
  \put(1,1.1){\colorbox{gray}{\makebox(1.8,0.8){}}}
  \put(1,0.1){\colorbox{gray}{\makebox(0.8,0.8){}}}
\thicklines
  \put(1,5){\line(1,0){5}}
  \put(1,4){\line(1,0){5}}
  \put(6,4){\line(0,1){1}}
  \put(1,3){\line(1,0){4}}
  \put(5,3){\line(0,1){1}}
  \put(1,2){\line(1,0){3}}
  \put(4,2){\line(0,1){1}}
  \put(1,1){\line(1,0){2}}
  \put(3,1){\line(0,1){1}}
  \put(1,0){\line(1,0){1}}
  \put(2,0){\line(0,1){1}}
  \put(1,0){\line(0,1){5}}
\thinlines
  \dashline{0.05}(2,1)(2,5)
  \dashline{0.05}(3,2)(3,5)
  \dashline{0.05}(4,3)(4,5)
  \dashline{0.05}(5,4)(5,5)

  \put(0.5,4.5){\makebox(0,0){${W}_0^{(1)}$}}
  \put(0.5,3.5){\makebox(0,0){${W}_1^{(1)}$}}
  \put(0.5,2.5){\makebox(0,0){${W}_2^{(1)}$}}
  \put(0.5,1.5){\makebox(0,0){${W}_3^{(1)}$}}
  \put(0.5,0.5){\makebox(0,0){${W}_4^{(1)}$}}
  \put(5.5,4.5){\makebox(0,0){${V}_4^{(2)}$}}
  \put(4.5,3.5){\makebox(0,0){${V}_3^{(2)}$}}
  \put(3.5,2.5){\makebox(0,0){${V}_2^{(2)}$}}
  \put(2.5,1.5){\makebox(0,0){${V}_1^{(2)}$}}
  \put(1.5,0.5){\makebox(0,0){${V}_0^{(2)}$}}
  \put(5.5,5.4){\makebox(0,0){${W}_4^{(2)}$}}
  \put(4.5,5.4){\makebox(0,0){${W}_3^{(2)}$}}
  \put(3.5,5.4){\makebox(0,0){${W}_2^{(2)}$}}
  \put(2.5,5.4){\makebox(0,0){${W}_1^{(2)}$}}
  \put(1.5,5.4){\makebox(0,0){${W}_0^{(2)}$}}
\end{picture}\qquad
\begin{picture}(6,6)
  \put(1,0.1){\colorbox{gray}{\makebox(0.8,4.8){}}}
  \put(2,1.1){\colorbox{gray}{\makebox(0.8,3.8){}}}
  \put(3,2.1){\colorbox{gray}{\makebox(0.8,2.8){}}}
  \put(4,3.1){\colorbox{gray}{\makebox(0.8,1.8){}}}
  \put(5,4.1){\colorbox{gray}{\makebox(0.8,0.8){}}}
\thicklines
  \put(1,5){\line(1,0){5}}
  \put(5,4){\line(1,0){1}}
  \put(6,4){\line(0,1){1}}
  \put(4,3){\line(1,0){1}}
  \put(5,3){\line(0,1){2}}
  \put(3,2){\line(1,0){1}}
  \put(4,2){\line(0,1){3}}
  \put(2,1){\line(1,0){1}}
  \put(3,1){\line(0,1){4}}
  \put(1,0){\line(1,0){1}}
  \put(2,0){\line(0,1){5}}
  \put(1,0){\line(0,1){5}}
\thinlines
  \dashline{0.05}(1,1)(2,1)
  \dashline{0.05}(1,2)(3,2)
  \dashline{0.05}(1,3)(4,3)
  \dashline{0.05}(1,4)(5,4)
  
  \put(0.5,4.5){\makebox(0,0){${W}_0^{(1)}$}}
  \put(0.5,3.5){\makebox(0,0){${W}_1^{(1)}$}}
  \put(0.5,2.5){\makebox(0,0){${W}_2^{(1)}$}}
  \put(0.5,1.5){\makebox(0,0){${W}_3^{(1)}$}}
  \put(0.5,0.5){\makebox(0,0){${W}_4^{(1)}$}}
  \put(5.5,4.5){\makebox(0,0){${V}_0^{(1)}$}}
  \put(4.5,3.5){\makebox(0,0){${V}_1^{(1)}$}}
  \put(3.5,2.5){\makebox(0,0){${V}_2^{(1)}$}}
  \put(2.5,1.5){\makebox(0,0){${V}_3^{(1)}$}}
  \put(1.5,0.5){\makebox(0,0){${V}_4^{(1)}$}}
  \put(5.5,5.4){\makebox(0,0){${W}_4^{(2)}$}}
  \put(4.5,5.4){\makebox(0,0){${W}_3^{(2)}$}}
  \put(3.5,5.4){\makebox(0,0){${W}_2^{(2)}$}}
  \put(2.5,5.4){\makebox(0,0){${W}_1^{(2)}$}}
  \put(1.5,5.4){\makebox(0,0){${W}_0^{(2)}$}}
\end{picture}}
\caption{\label{fig:sg}Different representations of
the sparse grid space.}
\end{center}
\end{figure}

In view of \eqref{eq:ml}, factoring out with respect to the 
first component, one can rewrite \eqref{eq:sg} according to
\begin{equation}\label{eq:repr1}
  \widehat{U}_j = \bigoplus_{\ell=0}^j W_\ell^{(1)}\otimes 
  	\bigg(\bigoplus_{\ell'=0}^{j-\ell} W_{\ell'}^{(2)}\bigg)
  		= \bigoplus_{\ell=0}^j W_\ell^{(1)}\otimes V_{j-\ell}^{(2)}.
\end{equation}
{This representation has already been proposed in \cite{GH1}.}
Obviously, in complete analogy there holds
\begin{equation}\label{eq:repr2}
  \widehat{U}_j = \bigoplus_{\ell'=0}^j \bigg(\bigoplus_{\ell=0}^{j-\ell'} 
  	W_{\ell}^{(1)}\bigg)\otimes W_{\ell'}^{(2)}
  		= \bigoplus_{\ell=0}^j V_{j-\ell}^{(1)}\otimes W_\ell^{(2)}.
\end{equation}
We refer to Fig.~\ref{fig:sg} for an illustration, where the left plot
corresponds to the representation \eqref{eq:repr1} and the right
plot corresponds to the representation \eqref{eq:repr2}. The advantage
of the representation \eqref{eq:repr1} is that we can give up 
the requirement that the spaces $\{V_\ell^{(2)}\}$ are nested.
Likewise, for the representation \eqref{eq:repr2}, the spaces 
$\{V_\ell^{(1)}\}$ need not to be nested any more.

In the context of the parametric diffusion problem \eqref{eq:modprob1}, 
we aim at computing
\[
\int_{{\Gamma}} \mathcal{F}\big(u({\bs y})\big)\rho({\bs y})\d{\bs y},
\]
where \(\rho\) is the density of some measure on
\(\Gamma\) and \(\mathcal{F}\)
denotes a functional or, as in the case of moment
computation, it may be defined as \(\mathcal{F}\big(u({\bs y})\big)
= u^p({\bs y})\) for \(p=1,2,\ldots\).
In this context, $\{V_\ell^{(1)}\}$ corresponds to a sequence of finite element 
spaces and $\{V_\ell^{(2)}\}$ refers to a sequence of quadrature rules. 
If we denote the finite element solutions of \eqref{eq:modprob1} by 
$\mathfrak{u}_\ell({\bs y})\in V_\ell^{(1)}$ and if we denote the sequence 
of quadrature rules by \(Q_{\ell'}\colon C({{\Gamma}})\to\mathbb{R}\), we 
thus arrive with respect to \eqref{eq:repr1} at the decomposition
\begin{equation}\label{eq:nestedV}
\int_{{\Gamma}} \mathcal{F}\big(u({\bs y})\big)\rho({\bs y})\d{\bs y}\approx
\sum_{\ell=0}^j Q_{j-\ell}\Delta\mathcal{F}_\ell\big(u({\bs y})\big),
\end{equation}
where 
$\Delta\mathcal{F}_\ell\big(u({\bs y})\big)\isdef \mathcal{F}
\big(\mathfrak{u}_\ell({\bs y})\big)-\mathcal{F}\big(\mathfrak{u}_{\ell-1}
({\bs y})\big)$ and $\mathcal{F}\big(\mathfrak{u}_{-1}({\bs y})\big)\isdef 0$,
{see \cite{HPS1}}.
On the other hand, similarly to \eqref{eq:repr2}, we obtain
the decomposition
\begin{equation}\label{eq:nestedQ}
\int_{{\Gamma}} \mathcal{F}\big(u({\bs y})\big)\rho({\bs y})\d{\bs y}\approx
\sum_{\ell=0}^j \Delta Q_{\ell}\mathcal{F}\big(\mathfrak{u}_{j-\ell}({\bs y})\big),
\end{equation}
where $\Delta Q_{\ell}\isdef Q_{\ell}-Q_{\ell-1}$ and $Q_{-1}\isdef 0$.
Both representations are equivalent but have a 
different impact on its numerical implementation.

Originally, {multilevel quadrature methods} have been interpreted 
as variance reduction methods for the Monte Carlo quadrature, a 
view which has originally been introduced for the approximation of parametric integrals, 
cf.~\cite{H,H2}. Consequently, the representation \eqref{eq:repr1}, 
and thus the decomposition \eqref{eq:nestedV}, has been used in 
previous articles, see, for example, \cite{G,Gil15} for stochastic 
ordinary differential equations and {\cite{BSZ,HPS1,TSU13,TJWG15}} for 
partial differential equations with random data. To this end, usually a 
nested sequence of approximation spaces is presumed such 
that the complement spaces $\big\{W_\ell^{(1)}\big\}$ are well-defined.
In the context of partial differential equations, these complement spaces 
are given via the difference of Galerkin projections onto subsequent 
finite element spaces. This circumstance can be avoided in the case 
of \(\mathcal{F}\) being a functional, cf.~{\cite{HANvST15,TSU13}}. 
Still, we emphasize that, particularly in the context
of the Monte Carlo method, there are already results available, which allow
for giving up this nestedness, see e.g.~\cite{CST13,TSU13}. A more general 
result addressing the resulting error in the underlying bilinear form can 
be found in \cite{Sieb15}.

The decomposition \eqref{eq:nestedV} is well suited if the spatial 
dimension is small, as it is the case for one-dimensional partial 
differential equations with random data or for stochastic ordinary 
differential equations. Nevertheless, in two or three spatial dimensions, 
the construction of nested approximation spaces might  be difficult
or even not be possible at all. Sometimes, in view of adaptive 
refinement strategies, it might be favourable to give up
nestedness. In the article at hand, we employ the 
decomposition \eqref{eq:nestedQ}. It allows more naturally
for non-nested finite element spaces which, in turn, induce 
different approximations of the underlying domain.
Moreover, using nested quadrature 
formulae, a considerable speed-up is achieved in comparison to 
the conventional multilevel quadrature which is based on the 
representation \eqref{eq:nestedV}.

The rest of the article is organized as follows. We start by
introducing the underlying random model in Section \ref{sec:reformulation}
and perform the parametric reformulation that results in
\eqref{eq:modprob1}.
Then, the next two sections are 
dedicated to the discretization, namely the quadrature 
rule for the parametric variable (Section~\ref{sec:QM})
and the finite element discretization for the physical 
domain (Section~\ref{sec:FEM}). The multilevel 
quadrature for the model problem is discussed in Section
\ref{sec:MLQ}. 
In Section \ref{sec:error}, we present the error and regularity analysis for
the multilevel quadrature taking into account polygonal approximations
of curved domains. We emphasize that the key result in this section,
namely Lemma~\ref{lem:decay}, is robust with respect to the parameter dimension \(m\).
Afterwards, in Section~\ref{sec:consistency}, we consider
a fully discrete approximation of the solution to \eqref{eq:modprob1}
and take also quadrature errors in the bilinear form into account.
Again, the main result (Theorem~\ref{thm:fullydisc}) is robust 
with respect to the parameter dimension \(m\). Finally, 
in Section \ref{sec:results}, we provide numerical results in
three spatial dimensions to validate our approach.

Throughout this article, in order to avoid the repeated use of generic but
unspecified constants, we mean by $C \lesssim D$ that $C$ can be
bounded by a multiple of $D$, independently of parameters
which $C$ and $D$ may depend on. Obviously, $C \gtrsim D$
is defined as $D \lesssim C$, and $C \sim D$ as $C \lesssim D$
and $C \gtrsim D$.

\section{Problem setting}\label{sec:reformulation}
Let $(\Omega,\Sigma,\mathbb{P})$ be a complete and separable 
probability space with $\sigma$-field $\Sigma\subset 2^\Omega$ 
and probability measure $\mathbb{P}$. 
We intend to compute the expectation
\[
  \mathbb{E}[u] = \int_\Omega u(\omega)\d\mathbb{P}(\omega)
  	\in H_0^1(D)
\]
and the variance 
\[
  \mathbb{V}[u] = \int_\Omega \big\{u(\omega)
	-\mathbb{E}[u]\big\}^2\d\mathbb{P}(\omega)
		\in W_0^{1,1}(D)
\]
of the random function $u(\omega)\in H_0^1(D)$ which 
solves the stochastic diffusion problem
\begin{equation}\label{eq:modprob}
  -\div\big({{a}}(\omega)\nabla u(\omega)\big) =f\ \text{in}\ D
  \ \text{for almost every $\omega\in\Omega$}.
\end{equation} 

For sake of simplicity, we assume that the stochastic 
diffusion coefficient is given by a finite \KL expansion
\begin{equation}\label{KLa}
  {{a}}({\bs x},\omega)=\mathbb{E}[{{a}}]({\bs x})
 	+\sum_{k=1}^m\sqrt{\lambda_k}\varphi_k({\bs x})\psi_k(\omega)
\end{equation}
with pairwise \(L^2\)-orthonormal functions $\varphi_k\in L^\infty(D)$ and
stochastically independent random variables $\psi_k(\omega)\in [-1,1]$. 
Especially, it is assumed that the random variables admit continuous 
density functions $\rho_k\colon [-1,1] \to\mathbb{R}$ with respect to 
the Lebesgue measure.

In practice, one generally has to compute the expansion \eqref{KLa}
from the given covariance kernel 
\[
  \Cov[{{a}}]({\bs x},{\bs x}') 
  	= \int_\Omega \big\{{{a}}({\bs x},\omega)-\E[{{a}}]({\bs x})\big\}
		\big\{{{a}}({\bs x}',\omega)-\E[{{a}}]({\bs x}')\big\} \d\mathbb{P}(\omega).
\]
If the expansion contains infinitely many terms, it has to be appropriately 
truncated which will induce an additional discretization error. For 
details, we refer the reader to \cite{GS,HPS14a,L,ST2}.

The assumption that the random variables $\{\psi_k(\omega)\}$ 
are independent implies that the joint 
density function of the random variables 
is given by
\(
{\rho} ({\bs y}) \isdef\prod_{k=1}^m \rho_k(y_k).
\)

Thus, we are able to reformulate the stochastic problem \eqref{eq:modprob} 
as a parametric, deterministic problem in \(L^2_{{\rho}}({{\Gamma}})\). To this end, the 
probability space \(\Omega\) is identified with its image \({{\Gamma}}\isdef[-1,1]^m\) with respect
to the measurable mapping
\[
\boldsymbol{\psi}\colon\Omega\to{{\Gamma}},\quad \omega\mapsto \boldsymbol{\psi}(\omega)
	\isdef\big(\psi_1(\omega),\ldots,\psi_m(\omega)\big).
\]
Hence, the random variables \(\psi_k\) are substituted by coordinates \(y_k\in[-1,1]\). 

We introduce  the measure \({\rho}(\bs{y})\d{\bs y} \) on \({{\Gamma}}\),
which is defined by the product density function
\(
{\rho} ({\bs y})\isdef \prod_{k=1}^m \rho_k(y_k).
\)

Next, in order to ensure \(H^2\)-regularity of the model 
problem, let \(D\subset\mathbb{R}^d\), \(d=2,3\), be either a convex, polygonal 
domain. We consider the parametric diffusion problem
\begin{equation}	\label{eq:parprob}
\begin{aligned}
&\text{find \(u\in{L^2_{{\rho}}\big({{\Gamma}}; H_0^1(D)\big)}\) such that}\\
&\qquad-\div\big({{a}}({\bs y})\nabla u({\bs y})\big)=f\ \text{in $D$}\ 
	\text{for almost every \({\bs y} \in {{\Gamma}}\)},
\end{aligned}
\end{equation}
with \(f\in L^2(D)\) and \({{a}}\colon D\times{{\Gamma}}\to\mathbb{R}\) with
\begin{equation}\label{eq:paramalpha}
  {{a}}({\bs x},{\bs y}) = \varphi_0({\bs x})
  	+\sum_{k=1}^m\sqrt{\lambda_k}\varphi_k({\bs x}) y_k,\quad\gamma_k\isdef\sqrt{\lambda_k}\|\varphi_k\|_{W^{1,\infty}(D)}.
\end{equation}
Note that {\(u\in L^2_{{\rho}}\big({{\Gamma}}; H_0^1(D)\big)\)
guarantees finite second order moments of the solution.}

By the Lax-Milgram theorem, the unique solvability of
the parametric diffusion problem \eqref{eq:parprob}
in $L_{\rho}^2\big({{\Gamma}};H_0^1(D)\big)$ follows immediately 
if we impose the condition
\begin{equation}\label{eq:alpha}
  0<{{a}}_{\min}\le{{a}}({\bs y})\le{{a}}_{\max}<\infty
  	\ \text{in $D$}
\end{equation}
for all ${\bs y}\in{{\Gamma}}$ on the diffusion coefficient. 
Moreover, we obtain the stability estimate
\[
\|u({\bs y})\|_{H^1(D)}\leq\frac{1}{a_{\min}}\|f\|_{H^{-1}(D)}
\lesssim\frac{1}{a_{\min}}\|f\|_{L^2(D)}\quad\text{for almost every }{\bs y}\in{{\Gamma}}.
\]
Hence, the solution to \eqref{eq:parprob} is essentially bounded
with respect to \({\bs y}\in{{\Gamma}}\).


In {e.g.~\cite{BNTT12,CDS,CDS11,ES14,ST3}}, it has been proven that the solution $u$ of 
\eqref{eq:parprob} is analytical as mapping $u\colon{{\Gamma}}\to 
H_0^1(D)$. Moreover, it has been shown in \cite{CDS} that \(u\)
is even an analytical mapping $u\colon{{\Gamma}}\to\mathcal{W}\isdef 
H_0^1(D)\cap H^2(D)$ given that the \(\{\varphi_k\}\) in \eqref{eq:paramalpha}
belong to $W^{1,\infty}(D)$. This constitutes the
necessary mixed regularity for a sparse tensor product discretization, see e.g.
{\cite{HPS2,KSS3}}.
A similar result for diffusion problems
with coefficients of the form \(\exp\big({{a}}({\bs x},{\bs y})\big)\) has been
shown in \cite{HS11}.

Since \(u\) is supposed to be in \(L^2_{{\rho}}\big({{\Gamma}}; H_0^1(D)\big)\),
we can compute its expectation
\begin{equation}	\label{eq:E}
\E[u]=\int_{{{\Gamma}}} 
	u({\bs y}){\rho}({\bs y})\d{\bs y}\in H_0^1(D)
\end{equation}
and its variance 
\begin{equation}	\label{eq:V}
\V[u] = \E[u^2]-\E[u]^2
	= \int_{{\Gamma}} u^2({\bs y}){\rho}({\bs y})\d{\bs y}
		-\mathbb{E}[u]^2\in W_0^{1,1}(D).
\end{equation}
We will focus in the sequel on the efficient numerical 
computation of these possibly high-dimensional integrals.


\section{Quadrature in the parameter space}\label{sec:QM}
The expectation and the variance of the solution \(u\) to \eqref{eq:parprob}
are given by the integrals 
\eqref{eq:E} and \eqref{eq:V}. To compute these integrals, 
we employ a sequence of quadrature formulae 
$\{Q_\ell\}$ for the Bochner integral
\[
  \Int\colon L_{\rho}^1({{\Gamma}};\mathcal{X})\to \mathcal{X},\quad  
  \Int v = \int_{{\Gamma}} v({\bs y})\rho({\bs y})\d{\bs y}
\]
where $\mathcal{X}\subset L^2(D)$ denotes some Banach space.
The quadrature formula
\begin{equation}	\label{eq:quadrature}
  Q_\ell\colon L_{\rho}^1({{\Gamma}};\mathcal{X})\to\mathcal{X},\quad  
  (Q_\ell v)({\bs x}) = \sum_{i=1}^{N_\ell}\omega_{\ell,i} 
  	v({\bs x},\boldsymbol\xi_{\ell,i})\rho(\boldsymbol\xi_{\ell,i})
\end{equation}
is supposed to provide the error bound
\begin{equation}	\label{eq:Q1}
  \|(\Int-Q_\ell)v\|_\mathcal{X}\lesssim\varepsilon_{\ell}\|v\|_{\mathcal{H}({{\Gamma}};\mathcal{X})}
\end{equation}
uniformly in $\ell\in\mathbb{N}$, where $\mathcal{H}({{\Gamma}};\mathcal{X})
\subset L_{\rho}^2({{\Gamma}},\mathcal{X})$ is a suitable Bochner 
space. 
{Note that since the density \(\rho\) is fixed, it will be suppressed in the upcoming error estimates and will, thus, be hidden in the constants.}

The following particular examples of quadrature rules 
\eqref{eq:quadrature} are considered in our numerical 
experiments:
\begin{itemize}
  \item[$\bullet$]
  The Monte Carlo method satisfies \eqref{eq:Q1} only
  with respect to the root mean square error. Namely, it holds
\[ 
  \sqrt{\E\big(\|(\Int-Q_\ell)v\|_\mathcal{X}^2\big)}\lesssim
  	\varepsilon_{\ell}\|v\|_{\mathcal{H}({{\Gamma}};\mathcal{X})}
\]  
  with $\varepsilon_\ell=N_\ell^{-1/2}$ and $\mathcal{H}({{\Gamma}};
  \mathcal{X})=L_{\rho}^2({{\Gamma}};\mathcal{X})$.   
  \item[$\bullet$]
  The quasi-Monte Carlo method leads typically 
  to $\varepsilon_\ell= N_\ell^{-1}(\log N_\ell)^{m}$, {where it is sufficient to consider the}
  Bochner space $\mathcal{H}({{\Gamma}};\mathcal{X})=W_{\text{mix}}^{1,1}
  ({{\Gamma}};\mathcal{X})$ of all equivalence classes of 
 functions $v\colon{{\Gamma}}\to\mathcal{X}$ with finite norm
\begin{equation}	\label{eq:W11mix}
  \|v\|_{W_{\text{mix}}^{1,1}({{\Gamma}};\mathcal{X})} \isdef
  	\sum_{\|{\bs q}\|_\infty\le 1}\int_{{\Gamma}}\bigg\|\frac{\partial^{\|{\bs q}\|_1}}
		{\partial y_1^{q_1}\partial y_2^{q_2}\cdots\partial y_m^{q_m}} 
			v({\bs y})\bigg\|_{\mathcal{X}}\d {\bs y}<\infty,
\end{equation}
  see e.g.~\cite{Niederreiter}. Note that, {in this case}, the estimate requires
  that the densities satisfy $\rho_k\in W^{1,\infty}(-1,1)$. For the Halton sequence, cf.~\cite{Hal60},
  it can even be shown that $\varepsilon_\ell= N_\ell^{\delta-1}$ for arbitrary \(\delta>0\)
  given that the spatial functions in \eqref{eq:paramalpha} satisfy 
  \(\gamma_k\lesssim k^{-3-\varepsilon}\) for arbitrary \(\varepsilon>0\).
  {This is a straightforward consequence from the results in \cite{Wan02}, see e.g.~\cite{HPS14b}.}
  \item[$\bullet$]
  Let the densities $\rho_k$ be in $W^{r,\infty}(-1,1)$.
  If $v\colon{{\Gamma}}\to\mathcal{X}$ has mixed regularity 
  of order $r$ with respect to the parameter ${\bs y}$, i.e.
  \begin{equation}\label{eq:Crnorm}
  \|v\|_{W_{\text{mix}}^{r,\infty}({{\Gamma}};\mathcal{X})}
  \isdef \max_{\|\balpha\|_\infty\leq r}\big\|\partial^\balpha_{\bs y}v\big\|_{L^\infty({{\Gamma}};\mathcal{X})}<\infty,
  \end{equation}
  then one can apply a (sparse) tensor product Clenshaw-Curtis 
  quadrature rule. This yields the convergence rate $\varepsilon_\ell
  = 2^{-\ell r}\ell^{m-1}\), where \(N_\ell\sim 2^\ell\ell^{m-1}\) and 
  $\mathcal{H}({{\Gamma}};\mathcal{X})=W_{\text{mix}}^{r,\infty}
  ({{\Gamma}};\mathcal{X})$, see \cite{NR96}.\footnote{The Clenshaw-Curtis quadrature 
  converges exponentially if the integrand $v\colon{{\Gamma}}\to\mathcal{X}$ 
  and the density $\rho$ are analytic.} 
\end{itemize}

For our purposes, we shall assume that the number $N_\ell$
of points of the quadrature formula ${Q}_\ell$ is chosen such that the 
corresponding accuracy is
\begin{equation}	\label{eq:Q2}
  \varepsilon_\ell= 2^{-\ell}.
\end{equation}
Then, for the respective difference quadrature $\Delta Q_\ell
\isdef Q_\ell-Q_{\ell-1}$, we immediately obtain by combining
\eqref{eq:Q1} and \eqref{eq:Q2} the error bound
\begin{align*}
\|\Delta Q_\ell v\|_{\mathcal{X}} =
\|(Q_\ell-Q_{\ell-1}) v\|_{\mathcal{X}}
&\leq \|(\Int-Q_\ell) v\|_{\mathcal{X}}+\|(\Int-Q_{\ell-1}) v\|_{\mathcal{X}}\\
&\lesssim 2^{-\ell}\|v\|_{\mathcal{H}({{\Gamma}};\mathcal{X})}.
\end{align*}

\section{Finite element approximation in the spatial variable}\label{sec:FEM}
In order to apply the quadrature formula \eqref{eq:quadrature}, we 
shall calculate the solution $u({\bs y})\in H_0^1(D)$ of the diffusion 
problem \eqref{eq:parprob} in certain points ${\bs y}\in{{\Gamma}}$. To 
this end, consider a not necessarily nested sequence of shape regular 
and quasi-uniform triangulations or tetrahedralizations $\{\mathcal{T}_\ell\}$ 
for \(\ell\geq 0\) of the domain $D$, respectively, 
each of which with the mesh size $h_\ell\sim 2^{-\ell}$. 
If the domain is not polygonal, then we obtain a polygonal approximation 
$D_\ell$ of the domain $D$ by replacing curved edges and faces by planar ones. 

In order to deal only with the fixed domain $D$ and not with 
the different polygonal approximations $D_\ell$, we follow 
\cite{B} and extend functions defined on $D_\ell$ by zero onto 
$D\setminus D_\ell$. Hence, given the triangulation or the 
tetrahedralization $\{\mathcal{T}_\ell\}$, we define the spaces
\begin{align*}
  \mathcal{S}_\ell(D)\isdef\{v\in C(D):v|_T\ & \text{is a linear polynomial for all}\ 
  T\in\mathcal{T}_\ell\\ &\qquad\qquad\text{and}\ v({\bs x})=0\ 
	\text{for all nodes ${\bs x}\in \partial D$}\}
\end{align*}
of continuous, piecewise linear finite elements.
Notice that it does hold $\mathcal{S}_\ell(D)\subset H^1(D)$
but in general $\mathcal{S}_\ell(D)\not\subset H_0^1(D)$.

We shall further introduce the finite element solution 
$\mathfrak{u}_\ell({\bs y})\in\mathcal{S}_\ell(D)$ of 
\eqref{eq:parprob} which satisfies
\begin{equation}\label{eq:blf}
\mathcal{B}_{\bs y}(\mathfrak{u}_\ell,v_\ell)
  \isdef\int_D a({\bs x},{\bs y})\nabla \mathfrak{u}_\ell({\bs x},{\bs y})\nabla v_\ell({\bs x})\d{\bs x}
  = \int_D f({\bs x})v_\ell({\bs x})\d{\bs x}
\end{equation}
for all $v_\ell\in\mathcal{S}_\ell(D)$.
If \(D\neq D_\ell\), the bilinear form $\mathcal{B}_{\bs y}(\cdot,\cdot)$ 
is also well defined for functions from $\mathcal{S}_\ell(D)$ since 
$\mathcal{S}_\ell(D)\subset H^1(D)$. Nevertheless, in order to maintain 
the ellipticity of the bilinear form, we shall assume that the 
mesh size \(h_0\) is sufficiently small to ensure that functions 
in $\mathcal{S}_\ell(D)$ are zero on a part of the boundary of $D$.

It is shown in e.g.~\cite{B,Brenner} that the finite element solution 
$\mathfrak{u}_\ell({\bs y})\in\mathcal{S}_\ell(D)$ of \eqref{eq:blf}
admits the following approximation properties.

\begin{lemma}\label{lem:femerror}
Consider a convex, polygonal domain $D$ or a domain 
with $C^2$-smooth boundary and let $f\in L^2(D)$. Then, 
the finite element solution $\mathfrak{u}_\ell({\bs y})\in \mathcal{S}_\ell(D)$ 
of the diffusion problem \eqref{eq:parprob} and respectively its square 
$\mathfrak{u}_\ell^2({\bs y})$
satisfy the error estimate
\begin{equation}	\label{eq:err-est-1}
 \big\|u^p({\bs y})-\mathfrak{u}_\ell^p({\bs y})\big\|_{\mathcal{X}}\lesssim h_\ell\|f\|_{L^2(D)}^p,
\end{equation}
where \(\mathcal{X}=H^1(D)\) for \(p=1\) and   \(\mathcal{X}=W^{1,1}(D)\) for \(p=2\).
The constants hidden in \eqref{eq:err-est-1}
depend on ${{a}}_{\min}$ and ${{a}}_{\max}$, but not on 
${\bs y}\in{{\Gamma}}$. 
\end{lemma}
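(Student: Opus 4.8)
The plan is to combine the classical Aubin--Nitsche duality estimate for the linear problem with a separate argument for the quadratic functional $p=2$, being careful about the domain approximation $D\neq D_j$. First I would treat $p=1$. For fixed ${\bf y}\in\square$, the bilinear form
$a_{\bf y}(v,w)\isdef\int_D\alpha({\bf x},{\bf y})\nabla v\nabla w\,\d{\bf x}$ is, thanks to \eqref{eq:alpha}, uniformly (in ${\bf y}$) bounded and coercive on $H_0^1(D)$, with constants governed only by $\alpha_{\min},\alpha_{\max}$. C\'ea's lemma then gives $\|u({\bf y})-\mathfrak u_j({\bf y})\|_{H^1(D)}\lesssim\inf_{w_j\in\mathcal S_j^1(D)}\|u({\bf y})-w_j\|_{H^1(D)}\lesssim h_j\,|u({\bf y})|_{H^2(D)}$, using the standard nodal-interpolation estimate on the quasi-uniform mesh $\mathcal T_j$ together with the $H^2$-regularity of $u({\bf y})$ established in \cite{ST3}; the $H^2$-seminorm is in turn controlled by $\|f\|_{L^2(D)}$ via the a priori estimate for the elliptic problem, uniformly in ${\bf y}$. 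Since $h_j\sim 2^{-j}$ this already yields \eqref{eq:err-est-1} for $p=1$ with $\mathcal X=H^1(D)$ (an $H^1$-norm estimate, not merely a seminorm, follows because $u({\bf y})-\mathfrak u_j({\bf y})$ vanishes on enough of $\partial D$ by the assumption on $h_0$, so Poincar\'e applies). When $D$ is only $C^2$, one invokes the Bramble-type extension-by-zero framework of \cite{B}: the perturbation of the bilinear form caused by replacing $D$ by $D_j$ contributes an additional $O(h_j)$ term, which is absorbed into the bound. The constant depends on $\alpha_{\min},\alpha_{\max}$ but not on ${\bf y}$ because coercivity, continuity, and the regularity estimate are all ${\bf y}$-uniform.

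Next I would handle $p=2$. Write $u^2-\mathfrak u_j^2=(u-\mathfrak u_j)(u+\mathfrak u_j)$ and estimate in $W^{1,1}(D)$ via the product rule:
\[
\|u^2-\mathfrak u_j^2\|_{W^{1,1}(D)}\lesssim\|u-\mathfrak u_j\|_{H^1(D)}\,\big(\|u\|_{H^1(D)}+\|\mathfrak u_j\|_{H^1(D)}\big),
\]
using the Cauchy--Schwarz inequality to pass from the $L^1$-norm of $\nabla\big((u-\mathfrak u_j)(u+\mathfrak u_j)\big)$ to the product of two $L^2$-norms. Both factors on the right are controlled: the first is $\lesssim 2^{-j}\|f\|_{L^2(D)}$ by the $p=1$ case, and the second is $\lesssim\|f\|_{L^2(D)}$ by the stability estimate quoted in Section~\ref{sec:SPDE} (which also bounds $\|\mathfrak u_j({\bf y})\|_{H^1(D)}$ uniformly in $j$ and ${\bf y}$, since the discrete problem inherits the same coercivity constant). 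Multiplying gives the claimed $2^{-j}\|f\|_{L^2(D)}^2$ bound in $\mathcal X=W^{1,1}(D)$, again with ${\bf y}$-independent constant.

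The main obstacle is the non-nestedness and, more precisely, the domain-approximation error when $D$ is curved. For nested meshes on a polygonal convex $D$ the argument above is entirely standard; the subtlety here is that each $\mathcal T_j$ triangulates a possibly different polygonal approximation $D_j$, so $\mathcal S_j^1(D)\not\subset H_0^1(D)$ and the exact and discrete problems live on slightly different domains. One must therefore use a Strang-type lemma accounting for (i) the geometric error $|D\triangle D_j|\lesssim h_j^2$, (ii) the resulting consistency error in the bilinear form and right-hand side, and (iii) the fact, flagged in the text, that coercivity of $a_{\bf y}$ on $\mathcal S_j^1(D)$ requires $h_0$ small enough that these finite-element functions vanish on a portion of $\partial D$ of positive measure (so that a discrete Poincar\'e inequality holds). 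Once these three ingredients are in place — all of which are worked out in \cite{B,Brenner} — the contributions are each $O(h_j)=O(2^{-j})$ and the proof reduces to bookkeeping. Since the problem statement explicitly permits invoking \cite{B,Brenner} for exactly these approximation properties, I would cite those references for the geometric/consistency estimates and present only the duality argument and the $p=2$ product estimate in detail.
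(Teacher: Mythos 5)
Your proposal is correct and follows essentially the same route as the paper: the $p=1$ case by standard finite element theory ($H^2$-regularity plus C\'ea/interpolation, with the domain-approximation issues delegated to \cite{B,Brenner}), and the $p=2$ case by factoring $u^2-\mathfrak u_j^2=(u-\mathfrak u_j)(u+\mathfrak u_j)$ and applying a H\"older-type product estimate in $W^{1,1}(D)$ together with the uniform $H^1$-stability of $u({\bf y})$ and $\mathfrak u_j({\bf y})$. The only cosmetic differences are that you spell out the details the paper compresses into a citation (and your opening mention of Aubin--Nitsche duality is unnecessary for the $H^1$ estimate), and that you bound $\|\mathfrak u_j({\bf y})\|_{H^1(D)}$ by discrete coercivity rather than by the triangle inequality with the error bound, which is equivalent.
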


\section{The multilevel quadrature method}\label{sec:MLQ}
Based on the nomenclature from the previous sections, we 
now introduce the multilevel quadrature in a formal way.
To that end, let \(u\in\mathcal{H}({{\Gamma}};H^2(D))\), where the 
underlying Bochner space is determined by the quadrature under
consideration. For the sequence \(\{\mathfrak{u}_\ell({\bs y})\}_{\ell}\) 
of finite element solutions, there obviously holds 
\(
\lim_{\ell\to\infty} \mathfrak{u}_\ell({\bs y}) = u({\bs y})
\)
uniformly in \({\bs y}\in{{\Gamma}}\). Thus, if \(\mathcal{F}\) 
is continuous, we obtain
\begin{equation}\label{eq:prop1}
\lim_{\ell\to\infty}\mathcal{F}\big(\mathfrak{u}_\ell({\bs y})\big)=\mathcal{F}\big(u({\bs y})\big)
\end{equation}
also uniformly in \({\bs y}\in{{\Gamma}}\). Moreover, we have 
for the sequence \(\{Q_\ell\}_\ell\) of quadrature rules and
for a sufficiently smooth integrand that
\begin{equation}\label{eq:prop2}
\lim_{\ell\to\infty} Q_\ell v=\int_{{\Gamma}} v({\bs y})\rho({\bs y})\d{\bs y}.
\end{equation}
The combination of the relations \eqref{eq:prop1} and 
\eqref{eq:prop2} leads to
\[
\int_{{\Gamma}} \mathcal{F}\big(u({\bs y})\big)\rho({\bs y})\d{\bs y}
= \sum_{\ell=0}^\infty\Delta Q_\ell\mathcal{F}\big(u({\bs y})\big)
= \sum_{\ell=0}^\infty\Delta Q_\ell\sum_{\ell'=0}^\infty\Delta\mathcal{F}_{\ell'}\big(u({\bs y})\big).
\]
Since \(\Delta Q_\ell\) is linear and continuous, we end up with
\[
\int_{{\Gamma}} \mathcal{F}\big(u({\bs y})\big)\rho({\bs y})\d{\bs y}
=\sum_{\ell,\ell'=0}^\infty\Delta Q_\ell\Delta\mathcal{F}_{\ell'}\big(u({\bs y})\big).
\]
Truncating this sum in accordance with \(\ell+\ell'\leq j\) then 
yields the multilevel quadrature representation \eqref{eq:nestedV} 
if we recombine the operators \(\Delta Q_\ell\). Analogously, we
obtain the representation \eqref{eq:nestedQ} if we recombine the 
operators \(\Delta\mathcal{F}_\ell\). Note that the sequence of 
the application of the operators \(\Delta Q_\ell\) and \(\Delta
\mathcal{F}_{\ell'}\) is crucial here. Moreover, we have repeatedly 
exploited the linearity of \(\Delta Q_\ell\).

Of course, the representations \eqref{eq:nestedV} and 
\eqref{eq:nestedQ} are mathematically equivalent. More precisely, 
if we set \(\mathcal{F}\big(\mathfrak{u}_{-1}({\bs y})\big)\isdef 0\),
there holds
\[
\sum_{\ell=0}^j Q_{j-\ell}\Delta \mathcal{F}_\ell\big(u({\bs y})\big)
=\sum_{\ell=0}^j \Delta Q_{\ell}\mathcal{F}\big(\mathfrak{u}_{j-\ell}({\bs y})\big).
\]
Thus, all available results for the representation \eqref{eq:nestedV} 
of the multilevel quadrature, see e.g.\ \cite{HPS1,HPS2} and the 
references therein, carry over to the representation \eqref{eq:nestedQ}.


Nonetheless, the multilevel quadrature based on 
representation \eqref{eq:nestedQ} has substantial advantages. 
On the one hand, it allows for an easy use of non-nested 
finite element meshes and even for adaptively refined finite 
element meshes. A further property of \eqref{eq:nestedQ} 
is an obvious reduction of the cost if nested quadrature 
formulae are employed.

\section{Error analysis}\label{sec:error}
In the sequel, we restrict ourselves for reasons of simplicity to the situations 
\(\mathcal{F}(u)=u\) and  \(\mathcal{F}(u)=u^2\)
which yield the expectation and the second moment of the solution to \eqref{eq:parprob}. This means that we consider
\begin{equation}\label{eq:sg-exp}
\Int u^p\approx
\sum_{\ell=0}^j \Delta Q_{\ell}\mathfrak{u}_{j-\ell}^p
=\sum_{\ell=0}^j Q_{j-\ell}\big(\mathfrak{u}_\ell^p-\mathfrak{u}_{\ell-1}^p\big)\quad\text{for }p=1,2.
\end{equation}
We derive a general approximation result for the multilevel quadrature based
on the generic estimate 
\begin{equation}\label{eq:generrest}
\big\|(\Int-{Q}_{\ell})(u^p-\mathfrak{u}^p_{\ell'})\big\|_{{\mathcal{X}}}
\lesssim 2^{-(\ell+\ell')}\|f\|_{L^2(D)}^p\quad\text{for }p=1,2
\end{equation}
with \(f\) being the right hand side of \eqref{eq:parprob} and \(h_{\ell'}\sim 2^{-\ell'}\).
In particular, any quadrature rule that satisfies this estimate gives rise to a
multilevel quadrature method.
In the sequel, we provide this estimate for the MLQMC as well as for the
multilevel Clenshaw-Curtis quadrature (MLCC). 

We remark that the derivation of the generic estimate \eqref{eq:generrest} 
for the Monte Carlo quadrature is straightforward under the condition that 
the integrand is square 
integrable with respect to the parameter \({\bs y}\), cf.~\cite{BSZ,HPS1}. 
In this case, the generic estimate can be derived similarly to Strang's lemma,
see \cite{TSU13}.
Nevertheless, since the Monte Carlo quadrature does not provide deterministic 
error estimates, we have to replace the norm in \(\mathcal{X}\) by the 
\(L^2_\rho({{\Gamma}};\mathcal{X})\)-norm. 


The situation becomes much more involved if parametric regularity has
to be taken into account. In the latter case, also bounds on the derivatives
of the increments have to be provided.
The next lemma is a generalization of similar results from \cite{HPS2,KSS3}, which provide
the smoothness of the Galerkin error with respect to the parameter \({\bs y}\in\Gamma\) 
for the non-conforming case \(D\neq D_\ell\).

\begin{lemma}\label{lem:decay}
For the error $\delta_\ell({\bs y})\isdef
(u-\mathfrak{u}_\ell)({\bs y})$ of the Galerkin projection, 
there holds the estimate
\begin{equation}
\big\|\partial^{\balpha}_{\bs y}\delta_\ell({\bs y})\big\|_{H^1(D)}\leq
Ch_\ell|\balpha|!
c^{|\balpha|}{\bgamma}^{\balpha}\|f\|_{L^2(D)}
\quad\text{for all }{\bs\alpha}\in\mathbb{N}^m,
\end{equation}
where \(\bgamma\isdef\{\gamma_k\}_{k=1}^m\), cf.~\eqref{eq:paramalpha}. 
The constants \(C,c>0\) are dependent on \(a_{\min}\) and \(a_{\max}\),
but independent of the parameter dimension \(m\).
\end{lemma}
\begin{proof}
By definition, there holds, cf.\ \eqref{eq:blf},
\[
\mathcal{B}_{\bs y}(\mathfrak{u}_\ell,v_\ell)
	= \int_Da({\bs y})\nabla\mathfrak{u}_\ell({\bs y})\nabla v_\ell\d{\bs x}
	= \int_D fv_\ell\d{\bs x}\quad\text{for all }v_\ell\in\mathcal{S}_\ell(D).
\]
On the other hand, integration by parts yields
\[
\mathcal{B}_{\bs y}(u,v_\ell)
	= \int_Da({\bs y})\nabla{u}({\bs y})\nabla v_\ell\d{\bs x}
	= \int fv_\ell\d{\bs x} + \int_{\partial D}a({\bs y})
		\frac{\partial u}{\partial {\bs n}}({\bs y})v_\ell({\bs x})\d\sigma_{\bs x}
\]
for all \(v_\ell\in S_\ell(D)\).
Thus, we obtain the perturbed Galerkin orthogonality
\begin{equation}\label{eq:pertGalerkin}
\mathcal{B}_{\bs y}\big(u-\mathfrak{u}_\ell,v_\ell) 
= \int_{\partial D}a({\bs y})\frac{\partial u}{\partial {\bs n}}({\bs y})
v_\ell({\bs x})\d\sigma_{\bs x}\quad\text{for all }v_\ell\in S_\ell(D).
\end{equation}
Due to the uniform ellipticity of the bilinear form, we can also define the 
Galerkin projection
\(\mathcal{P}_\ell({\bs y})\colon H^1_0(D)\to S_\ell(D)\) via
\[
\mathcal{B}_{\bs y}(u-\mathcal{P}_\ell u,v_\ell)=0\quad\text{for all }
v_\ell\in\mathcal{S}_\ell(D).
\]
It holds
\begin{equation}\label{eq:GalerkinSplit}
\begin{aligned}
\|\partial_{\bs y}^{\bs\alpha}(u-\mathfrak{u}_\ell)\|_{H^1(D)}
&\leq\|\mathcal{P}_\ell\partial_{\bs y}^{\bs\alpha}(u-\mathfrak{u}_\ell)\|_{H^1(D)}
+\|(I-\mathcal{P}_\ell)\partial_{\bs y}^{\bs\alpha}(u-\mathfrak{u}_\ell)\|_{H^1(D)}\\
&\leq\|\mathcal{P}_\ell\partial_{\bs y}^{\bs\alpha}(u-\mathfrak{u}_\ell)\|_{H^1(D)}
+\|(I-\mathcal{P}_\ell)\partial_{\bs y}^{\bs\alpha}u\|_{H^1(D)},
\end{aligned}
\end{equation}
since \(\partial_{\bs y}^{\bs\alpha}\mathfrak{u}_\ell\in S_\ell(D)\) and hence
\(\mathcal{P}_\ell\partial_{\bs y}^{\bs\alpha}\mathfrak{u}_\ell=\partial_{\bs y}^{\bs\alpha}\mathfrak{u}_\ell\).

In order to estimate the first term, we employ the perturbed Galerkin 
ortho\-go\-nality \eqref{eq:pertGalerkin} and obtain
\begin{equation}\label{eq:CDS}
\begin{aligned}
&\mathcal{B}_{\bs y}\big(\partial_{\bs y}^{\bs\alpha}(u-\mathfrak{u}_\ell),v_\ell\big)
-\partial^\balpha_{\bs y}\int_{\partial D}a({\bs y})
\frac{\partial u}{\partial {\bs n}}({\bs y})v_\ell\d\sigma_{\bs x}\\
&\qquad=-\sum_{\{k:\alpha_k\neq 0\}}\alpha_k\sqrt{\lambda_k}
\int_D\varphi_k\nabla\partial_{\bs y}^{\balpha-{\bs e}_k}(u-\mathfrak{u}_\ell)({\bs y})
\nabla v_\ell\d{\bs x},
\end{aligned}
\end{equation}
see e.g.\ \cite{CDS}. The derivatives of the boundary term satisfy
\begin{align*}
&\partial^\balpha_{\bs y}\int_{\partial D}a({\bs y})\frac{\partial u}{\partial {\bs n}}({\bs y})v_\ell\d\sigma_{\bs x}
=\sum_{\balpha'\leq\balpha}{\balpha\choose\balpha'}
\int_{\partial D}\big[\partial^{\balpha'}_{\bs y}a({\bs y})\big]\bigg[\partial^{\balpha-\balpha'}_{\bs y}\frac{\partial u}{\partial {\bs n}}({\bs y})\bigg]v_\ell\d\sigma_{\bs x}\\
&\quad=\int_{\partial D}a({\bs y})\frac{\partial(\partial^{\balpha}_{\bs y}u)}
	{\partial {\bs n}}({\bs y})v_\ell\d\sigma_{\bs x}
+\sum_{\{k:\alpha_k\neq 0\}}\alpha_k\sqrt{\lambda_k}
	\int_{\partial D}\varphi_k\frac{\partial(\partial^{\balpha-{\bs e}_k}_{\bs y}u)}
		{\partial {\bs n}}({\bs y})v_\ell\d\sigma_{\bs x}.
\end{align*}
Inserting this identity into \eqref{eq:CDS} yields
\begin{equation}\label{eq:CDSext}
\begin{aligned}
&\mathcal{B}_{\bs y}\big(\partial_{\bs y}^{\bs\alpha}(u-\mathfrak{u}_\ell),v_\ell\big)-
\int_{\partial D}a({\bs y})
\frac{\partial(\partial^\balpha_{\bs y}u)}{\partial {\bs n}}({\bs y})v_\ell\d\sigma_{\bs x}\\
&\qquad=-\sum_{\{k:\alpha_k\neq 0\}}\alpha_k\sqrt{\lambda_k}
\bigg[\int_D\varphi_k\nabla\partial_{\bs y}^{\balpha-{\bs e}_k}(u-\mathfrak{u}_\ell)({\bs y})
\nabla v_\ell\d{\bs x}\\
&\hspace*{5cm}-\int_{\partial D}\varphi_k\frac{\partial(\partial^{\balpha-{\bs e}_k}_{\bs y}u)}
	{\partial {\bs n}}({\bs y})v_\ell\d\sigma_{\bs x}\bigg].
\end{aligned}
\end{equation}

In order to bound the boundary integrals, we employ 
the following estimate, which holds true for any \(v,w\in H^1(D)\). It holds
\begin{align*}
\bigg|\int_{\partial D}a({\bs y})\frac{\partial v}{\partial {\bs n}}w\d\sigma_{\bs x}\bigg|
&\leq a_{\max} \bigg\|\frac{\partial v}{\partial {\bs n}}({\bs y})\bigg\|_{H^{-1/2}(\partial D)}
\|w\|_{H^{1/2}(\partial D)}\\
&\leq C_{\text{inv}}a_{\max}\|v\|_{H^1(D)}\|w\|_{H^{1/2}(\partial D)},
\end{align*}
where \(C_{\operatorname{inv}}\) is the norm of the inverse Neumann 
trace operator. Next, we employ a discrete version of the trace theorem 
provided by \cite[Lemma I\!I\!I.1.6]{B}, which reads
\begin{equation}\label{eq:discTrace}
\|v_\ell\|_{H^{1/2}(\partial D)}\leq c h_\ell\|v_\ell\|_{H^1(D)}\quad\text{for all }
v_\ell\in\mathcal{S}_\ell(D)
\end{equation}
with some constant \(c>0\). From this, we infer 
\[
\bigg|\int_{\partial D}a({\bs y})\frac{\partial(\partial^{\bs\alpha}_{\bs y}u)}{\partial {\bs n}}({\bs y})v_\ell\d\sigma_{\bs x}\bigg|
\leq Ch_\ell\|\partial^{\bs\alpha}_{\bs y}u({\bs y})\|_{H^1(D)}\|v_\ell\|_{H^{1}(D)}
\]
for all \(v_\ell\in\mathcal{S}_\ell(D)\) and some constant \(C>0\).

Inserting the latter estimate into \eqref{eq:CDSext} and choosing 
\(v_\ell=\mathcal{P}_\ell\partial_{\bs y}^{\bs\alpha}(u-\mathfrak{u}_\ell)\)
as test function, we arrive at
\begin{align*}
&a_{\min}\|\mathcal{P}_\ell\partial_{\bs y}^{\bs\alpha}(u-\mathfrak{u}_\ell)({\bs y})\|_{H^1(D)}^2
\leq Ch_\ell\|\mathcal{P}_\ell\partial_{\bs y}^{\bs\alpha}u({\bs y})\|_{H^1(D)}
\|\mathcal{P}_\ell\partial_{\bs y}^{\bs\alpha}(u-\mathfrak{u}_\ell)({\bs y})\|_{H^{1}(D)}\\
&\qquad\phantom{\leq}
+\sum_{\{k:\alpha_k\neq 0\}}\alpha_k\gamma_k\Big[
\|\partial_{\bs y}^{\balpha-{\bs e}_k}(u-\mathfrak{u}_\ell)({\bs y})\|_{H^1(D)}
\|\mathcal{P}_\ell\partial_{\bs y}^{\bs\alpha}(u-\mathfrak{u}_\ell)({\bs y})\|_{H^{1}(D)}\\
&\hspace*{4cm}+Ch_\ell\|\partial_{\bs y}^{\balpha-{\bs e}_k}u(\bs y)\|_{H^1(D)}
\|\mathcal{P}_\ell\partial_{\bs y}^{\bs\alpha}(u-\mathfrak{u}_\ell)({\bs y})\|_{H^{1}(D)}\Big].
\end{align*}
Simplifying this expression yields
\begin{align*}
&\|\mathcal{P}_\ell\partial_{\bs y}^{\bs\alpha}(u-\mathfrak{u}_\ell)({\bs y})\|_{H^1(D)}
\leq Ch_\ell\|\partial_{\bs y}^{\bs\alpha}u({\bs y})\|_{H^1(D)}\\
&\qquad+C\!\!\!\!\sum_{\{k:\alpha_k\neq 0\}}\alpha_k\gamma_k\Big[
\|\partial_{\bs y}^{\balpha-{\bs e}_k}(u-\mathfrak{u}_\ell)({\bs y})\|_{H^1(D)}
+ h_\ell\|\partial_{\bs y}^{\balpha-{\bs e}_k}u(\bs y)\|_{H^1(D)}\Big]
\end{align*}
for some other constant \(C>0\),
where we employed the stability of the Galerkin projection in the first term.
Next, in view of the estimate
\[
\|\partial_{\bs y}^{\bs\alpha}u({\bs y})\|_{H^1(D)}\le C
	|\bs\alpha|!c^{|\bs\alpha|}{\bs\gamma}^{\bs\alpha} \|f\|_{L^2(D)}
\]
for some constants \(C,c>0\),
see \cite{CDS}, we end up with
\begin{align*}
&\|\mathcal{P}_\ell\partial_{\bs y}^{\bs\alpha}(u-\mathfrak{u}_\ell)({\bs y})\|_{H^1(D)}\\
&\qquad\leq Ch_\ell c^{|\bs\alpha|}|\bs\alpha|!{\bs\gamma}^{\bs\alpha} \|f\|_{L^2(D)}
+C\!\!\!\!\sum_{\{k:\alpha_k\neq 0\}}\alpha_k\gamma_k
\|\partial_{\bs y}^{\balpha-{\bs e}_k}(u-\mathfrak{u}_\ell)({\bs y})\|_{H^1(D)}.
\end{align*}
for some constants \(C,c>0\).
Combining this with the initial estimate \eqref{eq:GalerkinSplit}
gives then
\begin{align*}
&\|\partial_{\bs y}^{\bs\alpha}(u-\mathfrak{u}_\ell)({\bs y})\|_{H^1(D)}
\leq C\!\!\!\!\sum_{\{k:\alpha_k\neq 0\}}\alpha_k\gamma_k
\|\partial_{\bs y}^{\balpha-{\bs e}_k}(u-\mathfrak{u}_\ell)({\bs y})\|_{H^1(D)}\\
&\hspace*{4.5cm}+Ch_\ell c^{|\bs\alpha|}|\bs\alpha|!{\bs\gamma}^{\bs\alpha} \|f\|_{L^2(D)}+
\|(I-\mathcal{P}_\ell)\partial_{\bs y}^{\bs\alpha}u\|_{H^1(D)}\\
&\hspace*{2cm}\leq C\!\!\!\!\sum_{\{k:\alpha_k\neq 0\}}\alpha_k\gamma_k
\|\partial_{\bs y}^{\balpha-{\bs e}_k}(u-\mathfrak{u}_\ell)({\bs y})\|_{H^1(D)}
	+Ch_\ell c^{|\bs\alpha|}|\bs\alpha|!{\bs\gamma}^{\bs\alpha} \|f\|_{L^2(D)},
\end{align*}
where we used 
\(\|(I-\mathcal{P}_\ell)\partial_{\bs y}^{\bs\alpha}u\|_{H^1(D)}
\leq C h_\ell c^{|\bs\alpha|}|\bs\alpha|!{\bs\gamma}^{\bs\alpha}\|f\|_{L^2(D)}\)
for some constants \(C,c>0\),
which follows from the approximation property of the finite element 
space \(S_\ell(D)\) and \cite[Theorem 6]{KSS3}. The proof is now 
concluded similarly to the proof of \cite[Theorem 7]{KSS3}.
\end{proof}

With this lemma, it is easy to show the following
result related to the second moment, cf.~\cite{HPS2}.
\begin{lemma}\label{lem:decderp}
The derivatives of the difference \(u^2-\mathfrak{u}^2_{\ell}\) 
satisfy the estimate 
\begin{equation}\label{eq:reggalhigh}
\big\|\partial^{\balpha}_{\bs y}\big({u}^{2}-\mathfrak{u}^2_{
\ell}\big)({\bs y})\big\|_{W^{1,1}(D)}\leq C
h_\ell |\balpha|! c^{|\balpha|}\bgamma^{\balpha} \|f\|_{L^2(D)}^2
\quad\text{for all }{\bs\alpha}\in\mathbb{N}^m
\end{equation}
with constants \(C,c>0\) dependent on \(a_{\min}\) and \(a_{\max}\).
\end{lemma}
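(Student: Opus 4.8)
The plan is to reduce the estimate \eqref{eq:reggalhigh} to Lemma~\ref{lem:decay} by writing the difference of squares as a product and distributing the derivative via the Leibniz rule. First I would factor
\[
u^2 - \mathfrak{u}_\ell^2 = \big(u - \mathfrak{u}_\ell\big)\big(u + \mathfrak{u}_\ell\big) = \delta_\ell\,\big(2u - \delta_\ell\big),
\]
so that the difference is a product of two functions: the Galerkin error $\delta_\ell({\bf y})$, which decays like $2^{-\ell}$ with mixed-derivative control from Lemma~\ref{lem:decay}, and the factor $2u - \delta_\ell$, which is uniformly bounded in $H^1(D)$ (and whose derivatives in ${\bf y}$ are controlled: $u$ is analytic into $\mathcal{W}$ by \cite{ST3}, and $\delta_\ell$ again by Lemma~\ref{lem:decay}). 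Applying the multivariate Leibniz rule gives
\[
\partial^{\balpha}_{\bf y}\big(u^2 - \mathfrak{u}_\ell^2\big)
= \sum_{\bbeta\le\balpha}\binom{\balpha}{\bbeta}\,\partial^{\bbeta}_{\bf y}\delta_\ell\cdot\partial^{\balpha-\bbeta}_{\bf y}\big(2u - \delta_\ell\big).
\]

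Next I would estimate the $W^{1,1}(D)$-norm of each summand by the generalized Hölder inequality, bounding the $W^{1,1}$-norm of a product by the product of two $H^1$-norms (as already done in the proof of Lemma~\ref{lem:femerror}): $\big\|\partial^{\bbeta}_{\bf y}\delta_\ell\cdot\partial^{\balpha-\bbeta}_{\bf y}(2u-\delta_\ell)\big\|_{W^{1,1}(D)}\lesssim \big\|\partial^{\bbeta}_{\bf y}\delta_\ell\big\|_{H^1(D)}\big\|\partial^{\balpha-\bbeta}_{\bf y}(2u-\delta_\ell)\big\|_{H^1(D)}$. Into the first factor I insert Lemma~\ref{lem:decay}, which supplies both the $2^{-\ell}$ decay and the factorially-growing bound $|\bbeta|!\,c^{|\bbeta|}\bgamma^{\bbeta}\|f\|_{L^2(D)}$. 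For the second factor I use the analyticity estimate for $u$ from \cite{ST3} (of the same Cauchy-type form $|\balpha-\bbeta|!\,\tilde c^{|\balpha-\bbeta|}\bgamma^{\balpha-\bbeta}\|f\|_{L^2(D)}$) together with Lemma~\ref{lem:decay} for $\delta_\ell$; enlarging $c$ if necessary, both give the same shape of bound.

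The remaining work is purely combinatorial: summing $\binom{\balpha}{\bbeta}|\bbeta|!\,|\balpha-\bbeta|!\,c^{|\bbeta|}\tilde c^{|\balpha-\bbeta|}\bgamma^{\bbeta}\bgamma^{\balpha-\bbeta}$ over $\bbeta\le\balpha$. Using $\binom{\balpha}{\bbeta}|\bbeta|!\,|\balpha-\bbeta|!\le|\balpha|!$ and counting $\prod_k(\alpha_k+1)\le 2^{|\balpha|}$ terms, one absorbs the multiplicities into a new constant $c$, arriving at $|\balpha|!\,c^{|\balpha|}\bgamma^{\balpha}$. Collecting the factor $2^{-\ell}$ and a single $\|f\|_{L^2(D)}$ from the $\delta_\ell$ factor and another $\|f\|_{L^2(D)}$ from the $u$-factor yields $2^{-\ell}|\balpha|!\,c^{|\balpha|}\bgamma^{\balpha}\|f\|_{L^2(D)}^2$, which is \eqref{eq:reggalhigh}. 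I expect the only mild subtlety to be bookkeeping the constants so that a single $c$ works for all $\balpha$ (the factorial/geometric majorant absorbs the binomial weights), and—if one wants to be careful about $D\neq D_j$—invoking the remark after Lemma~\ref{lem:decay} that the proof carries over with straightforward modifications; none of this is a genuine obstacle.
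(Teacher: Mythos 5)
Your proof is correct and follows essentially the route the paper intends (it only points to Lemma~\ref{lem:decay} and \cite{HPS2} for this "straightforward" step): factor \(u^2-\mathfrak{u}_\ell^2=(u-\mathfrak{u}_\ell)(u+\mathfrak{u}_\ell)\), apply the Leibniz rule, bound the \(W^{1,1}\)-norm of each product by the product of \(H^1\)-norms exactly as in the proof of Lemma~\ref{lem:femerror}, and insert the Cauchy-type bounds from Lemma~\ref{lem:decay} together with the analytic regularity of \(u\). Your combinatorial bookkeeping is also sound: \(\binom{\balpha}{\bbeta}|\bbeta|!\,|\balpha-\bbeta|!\le|\balpha|!\) (via \(\binom{\balpha}{\bbeta}\le\binom{|\balpha|}{|\bbeta|}\)) and the at most \(2^{|\balpha|}\) terms, respectively the resulting factor \(|\balpha|+1\le 2^{|\balpha|}\), are absorbed by enlarging \(c\), which is consistent with the \((|\balpha|+1)!\,\tilde c^{|\balpha|}\)-type bounds appearing later in the paper.
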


With the aid of Lemmata~\ref{lem:decay} and \ref{lem:decderp} 
together with the results from \cite{Wan02}, the generic
error estimate for the MLQMC with Halton points can be derived. The next lemma is 
for example shown in \cite{HPS16,Sieb15}.

\begin{lemma}\label{lem:QMCerr} Let \(u\in L^2_\rho\big({{\Gamma}};H^1_0(D)\big)\) 
be the solution to \eqref{eq:parprob} and 
\(\mathfrak{u}_\ell\) the associated Galerkin projection on 
level \(\ell\). Moreover, let
\(\rho_k\in W^{1,\infty}(-1,1)\) for \(k=1,\ldots,m\).
Then, for the quasi-Monte Carlo quadrature based on Halton points, there holds
\begin{equation}\label{eq:ErrQMC}
\big\|(\Int-{Q}_{\ell})(u^p-\mathfrak{u}^p_{\ell'})\big\|_{\mathcal{X}}
\lesssim 2^{-(\ell+\ell')}\|f\|_{L^2(D)}^p\quad\text{for }p=1,2
\end{equation}
with \(N_\ell\sim 2^{\ell/(1-\delta)}\) for arbitrary \(\delta>0\).
\end{lemma}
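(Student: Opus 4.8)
The plan is to decompose the left-hand side of \eqref{eq:ErrQMC} so that the spatial finite element error and the parametric quadrature error are separated in a product structure. The natural starting point is to recall that the Halton quadrature $Q_\ell$ (with $N_\ell$ points) satisfies, by the Koksma--Hlawka type bound from \cite{Hal60,KSS3,Wan02}, an estimate of the form $\|(\Int-Q_\ell)v\|_{\mathcal X}\lesssim N_\ell^{\delta-1}\|v\|_{W^{1,1}_{\text{mix}}(\square;\mathcal X)}$ for arbitrary $\delta>0$; with the calibration $N_\ell\sim 2^{\ell/(1-\delta)}$ this reads $\|(\Int-Q_\ell)v\|_{\mathcal X}\lesssim 2^{-\ell}\|v\|_{W^{1,1}_{\text{mix}}(\square;\mathcal X)}$. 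Applying this to $v=u^p-\mathfrak u^p_{\ell'}$ gives
\[
\big\|(\Int-Q_\ell)(u^p-\mathfrak u^p_{\ell'})\big\|_{\mathcal X}
\lesssim 2^{-\ell}\,\big\|u^p-\mathfrak u^p_{\ell'}\big\|_{W^{1,1}_{\text{mix}}(\square;\mathcal X)},
\]
so everything reduces to bounding the mixed Sobolev norm of the finite element error (resp.\ its square) by $2^{-\ell'}\|f\|^p_{L^2(D)}$.

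The second step is exactly this mixed-regularity bound, and it is where Lemmata \ref{lem:decay} and \ref{lem:decderp} do the work. For $p=1$, $\mathcal X=H^1(D)$, and the $W^{1,1}_{\text{mix}}$ norm in \eqref{eq:W11mix} is a finite sum over multi-indices $\mathbf q$ with $\|\mathbf q\|_\infty\le 1$ of $\int_\square\|\partial^{\mathbf q}_{\bf y}\delta_{\ell'}({\bf y})\|_{H^1(D)}\d{\bf y}$; Lemma \ref{lem:decay} bounds each integrand by $2^{-\ell'}|\mathbf q|!\,c^{|\mathbf q|}\bgamma^{\mathbf q}\|f\|_{L^2(D)}$ uniformly in ${\bf y}$, and since $m$ is fixed there are only finitely many such $\mathbf q$, so summing over them and integrating over the bounded domain $\square$ yields a constant (depending on $m$, $c$, $\bgamma$, $a_{\min}$, $a_{\max}$) times $2^{-\ell'}\|f\|_{L^2(D)}$. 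For $p=2$, $\mathcal X=W^{1,1}(D)$, one argues identically using Lemma \ref{lem:decderp} in place of Lemma \ref{lem:decay}, which gives the same geometric decay $2^{-\ell'}$ with $\|f\|^2_{L^2(D)}$ on the right. Combining the two steps multiplies the factors $2^{-\ell}$ and $2^{-\ell'}$ and delivers \eqref{eq:ErrQMC}.

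A point that needs care, flagged already in the remark preceding Lemma \ref{lem:decay}, is that when $D\neq D_j$ the functions in $\mathcal S^1_j(D)$ are only in $H^1(D)$ and not in $H^1_0(D)$, so $\mathfrak u_{\ell'}$ is not literally the Galerkin projection of $u$ onto a subspace of $H^1_0(D)$; the cited lemmata are stated for that idealized situation and one must invoke the "straightforward modifications" to their proofs (tracking the boundary-approximation terms, which are themselves $O(2^{-\ell'})$ by the quasi-uniformity $h_{\ell'}\sim 2^{-\ell'}$ and the assumption that $h_0$ is small enough to retain ellipticity). A second technical wrinkle is that the Halton bound \cite{Hal60} requires the densities to have enough smoothness, which is why the hypothesis $\rho_k\in W^{1,\infty}(-1,1)$ is imposed; this guarantees $u^p-\mathfrak u^p_{\ell'}$, weighted by $\rho$, lies in $W^{1,1}_{\text{mix}}(\square;\mathcal X)$. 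I expect the main obstacle to be purely bookkeeping rather than conceptual: checking that the product $v=(u-\mathfrak u_{\ell'})(u+\mathfrak u_{\ell'})$ in the $p=2$ case has the claimed mixed regularity requires a Leibniz expansion of $\partial^{\balpha}_{\bf y}$ across the product and uniform $\square$-bounds on all lower-order ${\bf y}$-derivatives of both factors — but all of this is already encapsulated in Lemma \ref{lem:decderp}, so once that lemma is granted the remaining argument is the short two-line chain above.
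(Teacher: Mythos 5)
Your proposal is correct and follows essentially the same route as the paper: a Koksma--Hlawka/star-discrepancy bound for the Halton rule (with \(N_\ell\sim 2^{\ell/(1-\delta)}\) absorbing the \(\log^m N_\ell\) factor for fixed \(m\)) combined with the mixed-derivative bounds of Lemmata~\ref{lem:decay} and~\ref{lem:decderp} applied to \(u^p-\mathfrak{u}^p_{\ell'}\). The only difference is bookkeeping: the paper explicitly carries out the Leibniz expansion of the product with the density \(\rho\) and the resulting Hardy--Krause variation over faces, which you only gesture at, but your reduction to a \(W^{1,1}_{\text{mix}}\)-type norm bound of the finite element error is the same estimate.
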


The next lemma establishes the generic estimate 
for the sparse grid quadrature based on the nested 
\emph{Clenshaw-Curtis abscissae}, cf.~\cite{GG,NR96}. These 
are given by the extrema of the Chebyshev polynomials
\[
\xi_k=\cos\bigg(\frac{(k-1)\pi}{n-1}\bigg)\quad\text{ for }k=1,\ldots,n, 
\]
where \(n=2^{j-1}+1\) if \(j>1\) and \(n=1\) with \(\xi_1=0\) if \(j=1\).
\begin{lemma}\label{lem:SGQerr}  
Let \(u\in L^2_\rho\big({{\Gamma}};H^1_0(D)\big)\) be the solution to \eqref{eq:parprob}
and let \(\mathfrak{u}_\ell\) be the associated Galerkin projection on level \(\ell\). Moreover, 
let \(\rho_k(y_k)\in W^{r,\infty}(-1,1)\) for \(k=1,\ldots,m\). Then, for the sparse grid 
quadrature based on Clenshaw-Curtis abscissae, there holds
\begin{equation}\label{eq:ErrSGQ}
\big\|(\Int-{Q}_{\ell})(u^p-\mathfrak{u}^p_{\ell'})\big\|_{\mathcal{X}}
\lesssim 2^{-(\ell r+\ell')}\ell^{m-1}\|f\|_{L^2(D)}^p\quad\text{for }p=1,2
\end{equation}
provided that \(N_\ell\sim 2^\ell\ell^{d-1}\).
\end{lemma}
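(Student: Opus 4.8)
The plan is to mirror the proof of Lemma~\ref{lem:QMCerr} almost verbatim, replacing the Koksma--Hlawka/star-discrepancy bound for Halton points by the classical error estimate for the (sparse) tensor product Clenshaw--Curtis quadrature. First I would recall from \cite{NR96} that for a univariate integrand in $C^r([-1,1])$ the Clenshaw--Curtis rule on $n=2^{j-1}+1$ nodes converges with rate $2^{-jr}$, and that Smolyak's construction turns this into the sparse grid estimate
\[
\big\|(\Int-Q_\ell)v\big\|_{\mathcal{X}}\lesssim 2^{-\ell r}\ell^{m-1}\,\|v\|_{W^{r,\infty}_{\mathrm{mix}}(\square;\mathcal{X})},
\]
which is exactly \eqref{eq:Q1} with $\varepsilon_\ell=2^{-\ell r}\ell^{m-1}$ and $N_\ell\sim 2^\ell\ell^{m-1}$ (here $d=m$ in the statement). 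This reduces the lemma to bounding $\|u^p-\mathfrak{u}^p_{\ell'}\|_{W^{r,\infty}_{\mathrm{mix}}(\square;\mathcal{X})}$.

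Next I would control the mixed norm of the integrand. Since the density $\rho$ enters the quadrature as in \eqref{eq:quadrature}, the quantity to estimate is $\partial^{\balpha}_{\bf y}\big[(u^p-\mathfrak{u}^p_{\ell'})({\bf y})\rho({\bf y})\big]$ for all $\|\balpha\|_\infty\le r$. Applying the Leibniz rule exactly as in the proof of Lemma~\ref{lem:QMCerr}, and invoking Lemma~\ref{lem:decay} for $p=1$ and Lemma~\ref{lem:decderp} for $p=2$ to bound $\|\partial^{\balpha-\balpha'}_{\bf y}(u^p-\mathfrak{u}^p_{\ell'})\|_{\mathcal{X}}\lesssim 2^{-\ell'}|\balpha-\balpha'|!\,c^{|\balpha-\balpha'|}\bgamma^{\balpha-\balpha'}\|f\|_{L^2(D)}^p$, together with $\|\partial^{\balpha'}_{\bf y}\rho\|_{L^\infty}\le\brho^{\balpha'}$ when $\rho_k\in C^r([-1,1])$, I obtain a bound of the form $2^{-\ell'}(|\balpha|+r)!\,\tilde c^{|\balpha|}\|f\|_{L^2(D)}^p$ uniformly over the finitely many multi-indices with $\|\balpha\|_\infty\le r$. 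Since $m$ is fixed, the number of such multi-indices and the resulting combinatorial factor $(r+1)^m$-type constant are absorbed into the hidden constant (which is allowed to depend on $m$ and $r$), giving
\[
\big\|u^p-\mathfrak{u}^p_{\ell'}\big\|_{W^{r,\infty}_{\mathrm{mix}}(\square;\mathcal{X})}\lesssim 2^{-\ell'}\|f\|_{L^2(D)}^p.
\]

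Finally, I would combine the two ingredients: inserting the last display into the sparse grid Clenshaw--Curtis estimate yields
\[
\big\|(\Int-Q_\ell)(u^p-\mathfrak{u}^p_{\ell'})\big\|_{\mathcal{X}}\lesssim 2^{-\ell r}\ell^{m-1}\cdot 2^{-\ell'}\|f\|_{L^2(D)}^p = 2^{-(\ell r+\ell')}\ell^{m-1}\|f\|_{L^2(D)}^p,
\]
which is \eqref{eq:ErrSGQ}, with the node count $N_\ell\sim 2^\ell\ell^{m-1}$ inherited from the Smolyak construction. I expect the main technical point to be the bookkeeping in the Leibniz-rule step: one must check that differentiating the product $(u^p-\mathfrak{u}^p_{\ell'})\rho$ up to order $r$ in each coordinate still produces a factor $2^{-\ell'}$ with a constant independent of $\ell'$ — this is precisely what Lemmata~\ref{lem:decay} and \ref{lem:decderp} guarantee, since their bounds are uniform in $\ell'$ and hold for all multi-indices. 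A secondary point is the straightforward affine reparameterization from $[-1,1]^m$ to the reference cube (and the associated Jacobian factor $2^m$), handled exactly as in the proof of Lemma~\ref{lem:QMCerr}; and, as remarked before Lemma~\ref{lem:decay}, minor modifications are needed when $D\neq D_j$, but these do not affect the rates.
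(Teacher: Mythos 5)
Your proposal is correct and follows essentially the same route as the paper: invoke the sparse Clenshaw--Curtis error bound from \cite{NR96} with rate $2^{-\ell r}\ell^{m-1}$ and $N_\ell\sim 2^\ell\ell^{m-1}$, then bound the mixed derivatives of $(u^p-\mathfrak{u}^p_{\ell'})\rho$ via the Leibniz rule together with Lemmata~\ref{lem:decay} and \ref{lem:decderp}, absorbing the $r$- and $m$-dependent combinatorial constants. The only cosmetic differences are your slightly looser factorial bound and the (unnecessary for Clenshaw--Curtis, since the abscissae already live on $[-1,1]$) remark about reparameterizing to the unit cube; neither affects the argument.
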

 
\begin{proof}
It is shown in \cite{NR96} that the number \(N_\ell\) of quadrature 
points of the sparse tensor product quadrature with 
Clenshaw-Curtis abscissae is of the order \(\mathcal{O}(2^\ell\ell^{d-1})\). In addition, 
we have for functions \(v\colon{{\Gamma}}\to\mathbb{R}\) with mixed 
regularity the following error bound:
\[
{\bigg|\int_{\Gamma}v({\bs y})\d{\bs y}-\sum_{i=1}^{N_\ell}w_iv(\boldsymbol{\xi}_i)\bigg|}\lesssim 2^{-\ell r}\ell^{(m-1)}\max_{\|\balpha\|_\infty\leq r}
	\big\|\partial^\balpha_{\bs y}v\big\|_{L^\infty({{\Gamma}})}.
\]
Hence, to prove the desired assertion, we have to provide estimates on the 
derivatives \(\partial^\balpha_{\bs y}{\big[}\big(u^p({\bs y})-\mathfrak{u}^p_{{\ell'}}({\bs y})\big)
\rho({\bs y}){\big]}\). This can be accomplished by the Leibniz formula as 
in the proof of the previous lemma:
\begin{align*}
&\big\|\partial^\balpha_{\bs y}\big[({u}^p-{\mathfrak{u}}^p_{\ell'})({\bs y})\rho({\bs y})\big]\big\|_{\mathcal{X}}\\
&\qquad\leq\sum_{\balpha'\leq\balpha}{\balpha\choose\balpha'}\big\|\partial^{\balpha-\balpha'}_{\bs y}({u}^p-{\mathfrak{u}}^p_{\ell'})({\bs y})\big\|_{\mathcal{X}}
\big\|\partial^{\balpha'}_{\bs y}\rho({\bs y})\big\|_{L^\infty({{\Gamma}})}\\
&\qquad\lesssim
2^{-\ell'}\sum_{\balpha'\leq\balpha}{\balpha\choose\balpha'}
 |\balpha-\balpha'|! c^{|\balpha-\balpha'|}\bgamma^{\balpha-\balpha'} \|f\|_{L^2(D)}^p
\brho^{\balpha'}\\
&\qquad \lesssim 2^{-\ell'}(|\balpha|+1)!\|f\|_{L^2(D)}^p\tilde{c}^{|\balpha|}.
\end{align*}
Herein, we introduced again the quantity \(\brho\isdef
\big[\|\rho_1\|_{W^{r,\infty}(-1,1)},\ldots,\|\rho_m\|_{W^{r,\infty}(-1,1)}\big]\)
and \(\tilde{c}=\max_{k=1,\ldots,m}\max\{c\gamma_k,\rho_k\}\).
We set \(C(r)\isdef\max_{\|\balpha\|_\infty\leq r}(|\balpha|+1)!\tilde{c}^{|\balpha|}\)
and obtain
\begin{align*}
\big\|(\Int-{Q}_{\ell})(u^p-\mathfrak{u}^p_{\ell'})\big\|_{\mathcal{X}}^2
&\lesssim\big(2^{-\ell r}\ell^{(m-1)}2^{-\ell'}C(r)\|f\|_{L^2(D)}^p\big)^2.
\end{align*}
Then, exploiting that the bound on the derivatives of the integrand is independent of the 
parameter and taking square roots on both sides
completes the proof.
\end{proof}

\begin{remark}
As for the quasi-Monte Carlo quadrature,
by slightly decreasing \(r\) in the convergence result for the 
sparse tensor product quadrature, we may remove the factor 
\(\ell^{m-1}\) since \(\ell^{m-1}\lesssim 2^{\ell\delta}\) for arbitrary \(\delta>0\).
\end{remark}

Estimates of the type \eqref{eq:generrest} are crucial to show the
following approximation result for the multilevel quadrature.
More general, every quadrature that satisfies an estimate of
type \eqref{eq:generrest} is feasible for a related multilevel quadrature
method.

\begin{theorem}\label{theo:MLest}
Let $\{{Q}_\ell\}$ be a sequence of quadrature rules 
that satisfy an estimate of type \eqref{eq:generrest}, where
\(u\in L^2_\rho\big({{\Gamma}},H^1_0(D)\big)\) is the solution to 
\eqref{eq:parprob} that satisfies \eqref{eq:err-est-1}. 
Then, the error of the multilevel estimator for the mean
and the second moment defined 
in \eqref{eq:sg-exp}
is bounded by
\begin{equation}\label{eq:errormlest}
\bigg\|\Int{u}^p-\sum_{\ell=0}^j\Delta{Q}_{\ell}
\mathfrak{u}^p_{j-\ell}\bigg\|_{\mathcal{X}}
   		\lesssim 2^{-j}j\|f\|^p_{L^{2}(D)},
\end{equation}
where \(\mathcal{X}=H^1(D)\) if \(p=1\) and \(\mathcal{X}=W^{1,1}(D)\) if \(p=2\). 
\end{theorem}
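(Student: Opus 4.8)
The plan is to exploit the telescoping structure of the multilevel estimator together with the generic per-level estimate \eqref{eq:generrest}. First I would rewrite the error in a form that makes the two ingredients—finite element accuracy and quadrature accuracy—appear separately. Using the representation in \eqref{eq:sg-exp} and the exactness of the infinite sum, namely $\Int u^p = \sum_{\ell=0}^\infty \Delta Q_\ell u^p$ together with $u^p = \sum_{\ell'=0}^\infty \Delta \mathcal{F}_{\ell'}(u)$ (where $\Delta\mathcal{F}_{\ell'}(u) = \mathfrak{u}^p_{\ell'}-\mathfrak{u}^p_{\ell'-1}$), one obtains after truncation along $\ell+\ell'\le j$ the standard sparse-grid error identity
\[
\Int u^p - \sum_{\ell=0}^j \Delta Q_\ell \mathfrak{u}^p_{j-\ell}
= \sum_{\ell+\ell' > j} \Delta Q_\ell\, \Delta\mathcal{F}_{\ell'}(u),
\]
interpreting $\Delta Q_\ell$ applied to $u^p-\mathfrak{u}^p_{\ell'}$ suitably. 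The cleaner route is to write the truncated estimator error directly as a sum over the "missing" anisotropic blocks and to regroup: one term collecting the quadrature error $(\Int-Q_{j-\ell})$ acting on the finite-element increment $\mathfrak{u}^p_\ell - \mathfrak{u}^p_{\ell-1}$, plus the tail finite element error $\Int(u^p-\mathfrak{u}^p_j)$ if one prefers the representation \eqref{eq:nestedV}. By Theorem~1 the two representations agree, so I may freely pass between them.

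The key steps, in order, are: (1) Decompose the error as $\Int u^p - \sum_{\ell=0}^j \Delta Q_\ell \mathfrak{u}^p_{j-\ell} = \sum_{\ell=0}^j (\Int-Q_\ell)(\mathfrak{u}^p_{j-\ell}-\mathfrak{u}^p_{j-\ell-1}) + (\Int - Q_{-1} \text{-type remainder})$; more precisely, using the nested-$V$ form \eqref{eq:nestedV}, the error equals $\sum_{\ell=0}^{j}(\Int - Q_{j-\ell})\Delta\mathcal{F}_\ell(u) + \Int(u^p - \mathfrak{u}^p_j)$. (2) Bound the tail term $\|\Int(u^p-\mathfrak{u}^p_j)\|_{\mathcal{X}} \le \int_\square \|u^p({\bf y})-\mathfrak{u}^p_j({\bf y})\|_{\mathcal{X}}\rho({\bf y})\d{\bf y} \lesssim 2^{-j}\|f\|^p_{L^2(D)}$ using Lemma~\ref{lem:femerror} and $\int_\square \rho = 1$. (3) For each $\ell$ in the sum, write $\Delta\mathcal{F}_\ell(u) = (u^p-\mathfrak{u}^p_{\ell-1}) - (u^p-\mathfrak{u}^p_\ell)$ and apply \eqref{eq:generrest} to each piece, obtaining $\|(\Int-Q_{j-\ell})\Delta\mathcal{F}_\ell(u)\|_{\mathcal{X}} \lesssim 2^{-(j-\ell)}(2^{-(\ell-1)} + 2^{-\ell})\|f\|^p_{L^2(D)} \lesssim 2^{-j}\|f\|^p_{L^2(D)}$, where the $\ell$-dependence cancels. (4) Sum over $\ell = 0,\dots,j$: there are $j+1$ terms each of size $2^{-j}\|f\|^p_{L^2(D)}$, giving the bound $2^{-j}(j+1)\|f\|^p_{L^2(D)} \lesssim 2^{-j}j\|f\|^p_{L^2(D)}$.

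The main obstacle—though a mild one—is bookkeeping the boundary terms of the telescoping sum correctly, i.e.\ making sure the substitution $\tilde\ell = \ell-1$ and the conventions $\mathfrak{u}^p_{-1}=0$, $Q_{-1}=0$ are handled so that the "missing" part of the truncated double sum is exactly $\Int(u^p-\mathfrak{u}^p_j)$ plus the diagonal band, with no stray terms. A second, genuinely necessary point is that \eqref{eq:generrest} must be applied with the correct $\mathcal{X}$: for $p=1$ one uses $\mathcal{X}=H^1(D)$ and for $p=2$ one uses $\mathcal{X}=W^{1,1}(D)$, consistent with Lemma~\ref{lem:femerror} and Lemmata~\ref{lem:decay}–\ref{lem:decderp}. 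For the Monte Carlo case the same argument runs verbatim after replacing $\|\cdot\|_{\mathcal{X}}$ by $\|\cdot\|_{L^2_\rho(\square;\mathcal{X})}$ and interpreting the estimate in the root-mean-square sense, since the triangle inequality and \eqref{eq:generrest} are used only in that norm; I would remark on this rather than repeat the computation. The logarithmic-type factor $\ell^{m-1}$ appearing in Lemma~\ref{lem:SGQerr} can, as noted in the remark, be absorbed into a slightly reduced rate, so it does not affect \eqref{eq:errormlest}; alternatively, keeping it produces an extra $j^{m-1}$ which is still dominated by adjusting constants in the $\lesssim$, but the cleanest statement uses the absorbed form of \eqref{eq:generrest}.
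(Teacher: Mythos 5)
Your proof is correct, but it takes the mirror-image route to the paper's. The paper stays entirely in the representation \eqref{eq:nestedQ}: it inserts $\pm\,Q_j u^p=\pm\sum_{\ell=0}^j\Delta Q_\ell u^p$, so that the error splits into the single quadrature error $(\Int-Q_j)u^p$ plus the terms $\Delta Q_\ell\big(u^p-\mathfrak{u}^p_{j-\ell}\big)$, each of which is bounded by writing $\Delta Q_\ell=(Q_\ell-\Int)+(\Int-Q_{\ell-1})$ and applying \eqref{eq:generrest} twice; summing gives $(j+2)2^{-j}\|f\|^p_{L^2(D)}$. You instead pass, via the equivalence theorem of Section~\ref{sec:MLQ}, to the representation \eqref{eq:nestedV} and decompose the error into the quadrature errors $(\Int-Q_{j-\ell})\Delta\mathcal{F}_\ell(u)$ plus the finite element tail $\Int\big(u^p-\mathfrak{u}^p_j\big)$; the tail is handled by the purely deterministic Lemma~\ref{lem:femerror}, and each increment is bounded by splitting $\Delta\mathcal{F}_\ell(u)=(u^p-\mathfrak{u}^p_{\ell-1})-(u^p-\mathfrak{u}^p_\ell)$ and applying \eqref{eq:generrest} twice. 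Both routes use \eqref{eq:generrest} twice per level and arrive at the same $j\,2^{-j}$ rate, so nothing is gained or lost asymptotically; the paper's variant avoids the extra FE-tail term (the finite element error enters only through \eqref{eq:generrest}), while yours makes the sparse-grid structure of the ``missing blocks'' more explicit. Your remarks on the Monte Carlo case (replacing $\mathcal{X}$ by $L^2_\rho(\square;\mathcal{X})$ and reading the estimates in the root-mean-square sense) and on absorbing the factor $\ell^{m-1}$ are consistent with the paper's treatment.

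One small point to tidy: at $\ell=0$ your step (3) applies \eqref{eq:generrest} to $u^p-\mathfrak{u}^p_{-1}=u^p$, i.e.\ with $\ell'=-1$, which lies outside the range in which \eqref{eq:generrest} is formulated (it concerns Galerkin projections $\mathfrak{u}_{\ell'}$ with $\ell'\ge 0$). What you actually need there is the plain quadrature bound $\big\|(\Int-Q_j)u^p\big\|_{\mathcal{X}}\lesssim 2^{-j}\|f\|^p_{L^2(D)}$, which follows from \eqref{eq:Q1} combined with the parametric regularity of $u^p$ itself (the $\balpha$-derivative bounds underlying Lemmata~\ref{lem:decay} and \ref{lem:decderp} without the factor $2^{-\ell'}$); this is precisely the ingredient the paper invokes for its first term $\|\Int u^p-Q_ju^p\|_{\mathcal{X}}$, so it is available, but you should state it as a separate estimate rather than subsume it under \eqref{eq:generrest}.
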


\begin{proof}
We shall apply the following multilevel splitting of the error 
\begin{equation}\label{eq:multsplitting}
\begin{aligned}
\bigg\|\Int{u}^p-\sum_{\ell=0}^j\Delta{Q}_{\ell}
\mathfrak{u}^p_{j-\ell}\bigg\|_{{\mathcal{X}}}
&=\bigg\|\Int{u}^p-Q_ju^p+\sum_{\ell=0}^j\Delta Q_\ell u^p-\sum_{\ell=0}^j\Delta{Q}_{\ell}
\mathfrak{u}^p_{j-\ell}\bigg\|_{\mathcal{X}}\\
&\le\big\|\Int{u}^p-Q_ju^p\big\|_{\mathcal{X}}
	+ \sum_{\ell=0}^j\big\|\Delta Q_{\ell}\big({u}^p-\mathfrak{u}^p_{j-\ell}\big)\big\|_{\mathcal{X}}. 
\end{aligned}
\end{equation}
The first term just reflects the quadrature error and can be bounded
with similar arguments as in Lemmata~\ref{lem:QMCerr} and \ref{lem:SGQerr} 
according to
\[
\big\|\Int{u}^p-Q_ju^p\big\|_{\mathcal{X}}\lesssim 2^{-j}\|f\|_{L^2(D)}^p
\]
with a constant that depends on \(m\).
The term inside the sum satisfies with \eqref{eq:generrest} that
\begin{equation*}
\begin{aligned}
\big\|\Delta Q_{\ell}\big({u}^p-\mathfrak{u}^p_{j-\ell}\big)\big\|_{\mathcal{X}}
&\leq
\big\|(\Int-Q_{\ell})\big({u}^p-\mathfrak{u}^p_{j-\ell}\big)\big\|_{\mathcal{X}}
+\big\|(\Int-Q_{\ell-1})\big({u}^p-\mathfrak{u}^p_{j-\ell}\big)\big\|_{\mathcal{X}}\\
&\lesssim
2^{-(\ell+j-\ell)}\|f\|_{L^2(D)}^p+2^{-(\ell-1+j-\ell)}\|f\|_{L^2(D)}^p\lesssim 2^{-j}\|f\|_{L^2(D)}^p.
\end{aligned}
\end{equation*}
Thus, we can estimate \eqref{eq:multsplitting} as
\[
\bigg\|\Int{u}^p-\sum_{\ell=0}^j\Delta{Q}_{\ell}
\mathfrak{u}^p_{j-\ell}\bigg\|_{\mathcal{X}}\lesssim 2^{-j}\|f\|_{L^2(D)}^p
+\sum_{\ell=0}^j2^{-j}\|f\|_{L^2(D)}^p
\leq 2^{-j}(j+2)\|f\|_{L^2(D)}^p.
\]
This completes the proof.
\end{proof}
\begin{remark}
Note that we can achieve in our framework also
nestedness for the samples in the Monte Carlo method.
This is due to the fact that independent samples have
to be used only for the estimators \(Q_\ell\) for \(\ell=0,\ldots,j\).
But from the proof of the previous theorem, we see that \(Q_\ell\)
has not to be sampled independently from \(Q_{\ell'}\) for \(\ell\neq\ell'\).
Thus, we may employ the same underlying set of sample points on each
level.
\end{remark}

\section{Numerical approximation}
\label{sec:consistency}
The previous results guarantee that the consistency error due to 
the non-confor\-mity of the finite element space is of the correct order. 
In the actual implementation, instead of considering the bilinear form 
introduced in \eqref{eq:blf}, we shall consider on level \(\ell\geq 0\) 
the variational formulation
\[
\int_{D_\ell}\tilde{a}_\ell({\bs x},{\bs y})\nabla{\tilde{\mathfrak{u}}}_\ell
	\nabla v_\ell\d{\bs x}=\int_{D_\ell}fv_\ell\d{\bs x}
		\quad\text{for all }v_\ell\in\mathcal{S}^1_\ell(D),
\]
where \(\tilde{a}_\ell({\bs x},{\bs y})\) is a suitable piecewise 
constant approximation of \(a({\bs x},{\bs y})\) with respect to the 
triangulation $\mathcal{T}_\ell$ on \(D_\ell\). In this section,
we will provide a result that also takes into account the consistency 
error due to numerical quadrature in the bilinear form.
In particular, we account for the quadrature error that is 
introduced by integration with respect to \(D_\ell\) instead of integration 
with respect to \(D\). 

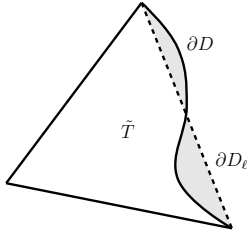
\begin{figure}[htb]
\begin{center}
\scalebox{0.6}{
\begin{tikzpicture}
\path[fill=gray] (3,4)--(5,-1) 
plot [smooth, tension=0.7] coordinates {(5,-1) (3.8,0.1) (4,1.7) (3.8,3) (3,4)};
\draw[line width = 1.5pt] plot [smooth, tension=0.7] coordinates {(5,-1) (3.8,0.1) (4,1.7) (3.8,3) (3,4)};
\draw[line width = 1.5pt] (0,0)-- (5,-1);
\draw[line width = 1.5pt] (0,0)-- (3,4);
\draw[line width = 1.5pt, dashed] (3,4)--(5,-1);
\draw (4.3,3.1) node {\Large\(\partial D\)};
\draw (5.0,0.5) node {\Large\(\partial D_\ell\)};
\draw (2.7,1.2) node {\Large\(\tilde{T}\)};
\end{tikzpicture}}
\end{center}
\caption{\label{fig:triangle}Triangle \(\tilde{T}\) located at the boundary of the domain. The solid
line indicates the boundary of \(D\), while the dashed line indicates the boundary of \(D_\ell\).}
\end{figure}

The situation is sketched in Fig.~\ref{fig:triangle} 
for the two dimensional case: For the given triangle \(T\) at the domain's 
boundary, the areas of the true domain \(D\) and its polygonal approximation
\(D_\ell\) differ by the grey shaded area. According to \cite{B}, this area is 
small relative to the size of the element. There holds
\begin{equation}\label{eq:triangest}
|\tilde{T}\cap(D\triangle D_\ell)|\leq ch_\ell|\tilde{T}|\quad\text{for some constant }c>0,
\end{equation}
where \(D\triangle D_\ell\isdef (D\setminus D_\ell)\cup(D\setminus D_\ell)\)
is the symmetric difference of sets. 
Moreover, since we consider piecewise linear finite elements which are set to
zero outside of \(D_\ell\), we have
\[
\int_{\tilde{T}}a({\bs y})\nabla\mathfrak{u}_\ell({\bs y})\nabla v_\ell\d{\bs x}=
\nabla\mathfrak{u}_\ell({\bs y})|_T\nabla v_\ell|_T\int_{\tilde{T} \cap{T}} a({\bs y})\d{\bs x},
\]
where \({T}\in\mathcal{T}_\ell\) is the polygonal approximation to \(\tilde{T}\).
Hence, setting
\[
a_\ell({\bs y})|_{T\cup\tilde{T}}\isdef\frac{1}{|T|}\int_{\tilde{T}\cap{T}}a({\bs y})\d{\bs x}
\]
yields
\[
\int_{\tilde{T}}a({\bs y})\nabla\mathfrak{u}_\ell({\bs y})\nabla v_\ell\d{\bs x}
=\int_{{T}}a_\ell({\bs y})\nabla\mathfrak{u}_\ell({\bs y})\nabla v_\ell\d{\bs x}
\quad\text{for all }T\in\mathcal{T}_\ell, v_\ell\in\mathcal{S}^1_\ell(D)
\]
and, therefore,
\[
\int_D a({\bs y})\nabla\mathfrak{u}_\ell({\bs y})\nabla v_\ell\d{\bs x}
=\int_{D_\ell}a_\ell({\bs y})\nabla\mathfrak{u}_\ell({\bs y})\nabla v_\ell\d{\bs x}
\quad\text{for all }v_\ell\in\mathcal{S}^1_\ell(D).
\]
Nevertheless, for numerical computations, it is 
more convenient to 
assume that \(a({\bs y})\in C^{0,1}(D\cup D_\ell)\) for all \(\ell\geq 0\) and
the barycenter \({\bs x}_c\in {T}\) is also contained in \(\tilde{T}\). Then, 
to avoid integration with respect to the curved element \(\tilde{T}\),
we employ a midpoint rule and consider $\tilde{a}_\ell({\bs y})|_{T\cup\tilde{T}}
\isdef a({\bs x}_c,{\bs y})$ instead. We have the following

\begin{lemma} 
There holds
\[
\big\|\partial_{\bs y}^\balpha(a_\ell-\tilde{a}_\ell)({\bs y})\big\|_{L^\infty(D)}\leq ch_\ell\bgamma^\balpha\|a({\bs y})\|_{W^{1,\infty}(D)}
\]
for some constant \(c>0\) which depends on \eqref{eq:triangest}.
\end{lemma}
\begin{proof}
By Taylor's theorem, there holds
\begin{equation}\label{eq:TaylorCoeff}
\|a({\bs y}) - a({\bs x}_c,{\bs y})\|_{L^\infty(D)}\leq ch_\ell\|a({\bs y})\|_{W^{1,\infty}(D)}.
\end{equation}
Moreover, we note that \(a_\ell\) as well as \(\tilde{a}_\ell\)  
differ on at most on \(|\mathcal{T}_\ell|\) triangles, where the 
difference is constant for each \(T\in\mathcal{T}_\ell\). Hence, we obtain
\begin{align*}
&\big\|\partial_{\bs y}^\balpha\big(a_\ell-\tilde{a}_\ell\big)({\bs y})\big\|_{L^\infty(D)}
=\max_{T\in\mathcal{T}_\ell}\frac{1}{|T|}\bigg|\int_{\tilde{T}\cap T}\partial_{\bs y}^\balpha 
a({\bs y})\d{\bs x}-\int_{T}\partial_{\bs y}^\balpha a({\bs x}_c,{\bs y})\d{\bs x}\bigg|\\
&\qquad=\max_{T\in\mathcal{T}_\ell}\frac{1}{|T|}\bigg|\int_{\tilde{T}\cap T}\partial_{\bs y}^\balpha\big(a-a({\bs x}_c)\big)({\bs y})\d{\bs x}-\int_{T\setminus\tilde{T}}\partial_{\bs y}^\balpha a({\bs x}_c,{\bs y})\d{\bs x}\bigg|\\
&\qquad\leq \max_{T\in\mathcal{T}_\ell}\frac{1}{|T|}\bigg(\bigg|\int_{\tilde{T}\cap T}\partial_{\bs y}^\balpha\big(a-a({\bs x}_c)\big)({\bs y})\d{\bs x}\bigg|+\bigg|\int_{T\setminus\tilde{T}}\partial_{\bs y}^\balpha a({\bs x}_c,{\bs y})\d{\bs x}\bigg|\bigg).
\end{align*}
Obviously, since \(a({\bs y})\) as well as \(a({\bs x}_c,{\bs y})\) are affine functions with respect to \({\bs y}\), all derivatives
for \(|\balpha|>1\) vanish. For \(|\balpha|\leq 1\), the first term is estimated by \eqref{eq:TaylorCoeff} together with the
fact that \(|T|=|T\cap\tilde{T}|\big(1+\mathcal{O}(h_\ell)\big)\), while the second term can be bounded by \(h_\ell\gamma_k\|a({\bs y})\|_{W^{1,\infty}(D)}\) if \(\alpha_k=1\), due to \eqref{eq:triangest}.
 Consequently, we obtain
\[
\big\|\partial_{\bs y}^\balpha\big(a_\ell-\tilde{a}_\ell\big)({\bs y})\big\|_{L^\infty(D)}\leq
\begin{cases}c h_\ell \|a({\bs y})\|_{W^{1,\infty}(D)}, &|\balpha|=0,\\
 c h_\ell\gamma_k\|a({\bs y})\|_{W^{1,\infty}(D)}, & \alpha_k = 1,\\
 0, & |\balpha|>1,
\end{cases}\]
for some constant \(c>0\). This completes the proof.
\end{proof}

Having this lemma at our disposal, we can prove the main result of this section.
\begin{theorem}\label{thm:fullydisc}
Let \(\mathfrak{u}_\ell\in\mathcal{S}_\ell(D)\) be the solution to 
\[
\int_{D_\ell}a_\ell({\bs y})\nabla\mathfrak{u}_\ell\nabla v_\ell\d{\bs x}=\int_{D_\ell}fv_\ell\d{\bs x}
\quad\text{for all }v_\ell\in\mathcal{S}_\ell(D),
\]
while \(\tilde{\mathfrak{u}}_\ell\in\mathcal{S}_\ell(D)\) solves 
\[
\int_{D_\ell}\tilde{a}_\ell({\bs y})\nabla\tilde{\mathfrak{u}}_\ell\nabla v_\ell\d{\bs x}=\int_{D_\ell}fv_\ell\d{\bs x}
\quad\text{for all }v_\ell\in\mathcal{S}_\ell(D).
\]
Then, there holds
\[
\|\partial^{\bs\alpha}_{\bs y}({\mathfrak{u}}_\ell-\tilde{\mathfrak{u}}_\ell)({\bs y})\|_{H^1(D)}
	\leq C h_\ell |c|^{|\bs\alpha|}|\bs\alpha|!{\bs\gamma}^{\bs\alpha}\|a({\bs y})\|_{W^{1,\infty}(D)}
		\|\tilde{\mathfrak{u}}({\bs y})\|_{H^1(D)}
\]
for some constants \(C,c>0\), which are independent of the parameter dimension \(m\).
\end{theorem}
\begin{proof}
There holds 
\[
\int_{D_\ell}{a}_\ell({\bs y})\nabla\big({\mathfrak{u}}_\ell-\tilde{\mathfrak{u}}_\ell\big)(\bs {y})\nabla v_\ell\d{\bs x}
=\int_{D_\ell}(\tilde{a}_\ell-{a}_\ell)({\bs y})\nabla\tilde{\mathfrak{u}}_\ell(\bs {y})\nabla v_\ell\d{\bs x}.
\]
Differentiating this equation yields via the Leibniz formula
\begin{align*}
&\int_{D_\ell}{a}_\ell({\bs y})\nabla\partial^{\bs\alpha}_{\bs y}\big({\mathfrak{u}}_\ell-\tilde{\mathfrak{u}}_\ell\big)(\bs {y})\nabla v_\ell\d{\bs x}\\
&\qquad=-\sum_{\{k:\alpha_k\neq 0\}}\alpha_k\int_{D_\ell}\partial^{{\bs e}_k}{a}_\ell({\bs y})
\nabla\partial^{{\bs\alpha}-{\bs e}_k}_{\bs y}\big({\mathfrak{u}}_\ell-\tilde{\mathfrak{u}}_\ell\big)(\bs {y})\nabla v_\ell\d{\bs x}\\
&\qquad\quad+\int_{D_\ell}(\tilde{a}_\ell-{a}_\ell)({\bs y})\nabla\partial^{\bs\alpha}_{\bs y}\tilde{\mathfrak{u}}_\ell(\bs {y})\nabla v_\ell\d{\bs x}\\
&\qquad\quad+\sum_{\{k:\alpha_k\neq 0\}}\alpha_k\int_{D_\ell}\partial^{{\bs e}_k}(\tilde{a}_\ell-a_\ell)({\bs y})
\nabla\partial^{{\bs\alpha}-{\bs e}_k}_{\bs y}\tilde{\mathfrak{u}}_\ell(\bs {y})\nabla v_\ell\d{\bs x}.
\end{align*}

Hence, choosing \(v_\ell=\partial^{\bs\alpha}_{\bs y}({\mathfrak{u}}_\ell-\tilde{\mathfrak{u}}_\ell\big)(\bs {y})\) results in
\begin{align*}
&a_{\ell,\min}\|\partial^{\bs\alpha}_{\bs y}({\mathfrak{u}}_\ell-\tilde{\mathfrak{u}}_\ell\big)({\bs y})\|_{H^1(D)}
\leq\sum_{\{k:\alpha_k\neq 0\}}\alpha_k\gamma_k\|\partial^{{\bs\alpha}-{\bs e}_k}_{\bs y}({\mathfrak{u}}_\ell-\tilde{\mathfrak{u}}_\ell)(\bs {y})\|_{H^1(D)}\\
&\hspace*{3cm}+ch_\ell\|a({\bs y})\|_{W^{1,\infty}(D)}
\|\partial^{\bs\alpha}_{\bs y}\tilde{\mathfrak{u}}_\ell({\bs y})\|_{H^1(D)}\\
&\hspace*{3cm}+\sum_{\{k:\alpha_k\neq 0\}}\alpha_k c\gamma_k h_\ell\|a({\bs y})\|_{W^{1,\infty}(D)}
\|\partial^{{\bs\alpha}-{\bs e}_k}_{\bs y}\tilde{\mathfrak{u}}_\ell({\bs y})\|_{H^1(D)},
\end{align*}
where \(a_{\ell,\min}>0\) is the constant of ellipticity associated to \(a_\ell\).

Next, we note that the standard bootstrapping argument can be employed to obtain the estimate
\[
\|\partial_{\bs y}^{\bs\alpha}\tilde{\mathfrak{u}}({\bs y})\|_{H^1(D)}\leq C
	|\bs\alpha|!c^{|\bs\alpha|}{\bs\gamma}^{\bs\alpha} \|\tilde{\mathfrak{u}}({\bs y})\|_{H^1(D)}
\]
for some constants \(C,c>0\), see e.g.\ \cite{CDS}. Therefore, we arrive at
\begin{align*}
&a_{\ell,\min}\|\partial^{\bs\alpha}_{\bs y}({\mathfrak{u}}_\ell-\tilde{\mathfrak{u}}_\ell\big)({\bs y})\|_{H^1(D)}
\leq\sum_{\{k:\alpha_k\neq 0\}}\alpha_k\gamma_k\|\partial^{{\bs\alpha}-{\bs e}_k}_{\bs y}
	({\mathfrak{u}}_\ell-\tilde{\mathfrak{u}}_\ell)(\bs {y})\|_{H^1(D)}\\
&\hspace*{5.5cm}+C|\bs\alpha|!h_\ell c^{|\bs\alpha|}{\bs\gamma}^{\bs\alpha}\|a({\bs y})\|_{W^{1,\infty}(D)}
	\|\tilde{\mathfrak{u}}({\bs y})\|_{H^1(D)}.
\end{align*}
From the previous estimate, the claim is again obtained as in the proof of \cite[Theorem 7]{KSS3}.
\end{proof}

The theorem directly yields to the fully discrete generic estimate
\[
\big\|(\Int-{Q}_{\ell})(u^p-\tilde{\mathfrak{u}}^p_{\ell'})\big\|_{{\mathcal{X}}}
\lesssim 2^{-(\ell+\ell')}\|f\|_{L^2(D)}^p\quad\text{for }p=1,2
\]
by using \eqref{eq:generrest} and the triangle inequality.
\section{Numerical results}\label{sec:results}
The numerical examples in this section are performed in three spatial 
dimensions. For the finite element discretization, we employ {\sc Matlab} 
and the Partial Differential Equation Toolbox\footnote{Release 2015a, The 
MathWorks, Inc., Natick, Massachusetts, United States.}. In both examples, 
the error is measured by interpolating the obtained solutions on a sufficiently
fine grid and comparing it there to a reference solution. We consider 
the MLMC, the MLQMC based on the Halton sequence, and the MLCC. 
Moreover, we set the density to \(\rho({\bs y})=(1/2)^m\) for our problems.

\subsection{An analytical example}
With our first example, we intend to validate the proposed method. To 
this end, we consider a simple quadrature problem on the unit ball
\(D=\{{\bs x}\in\mathbb{R}^3:\|{\bs x}\|_2<1\}\). Fig.~\ref{fig:gridball}
depicts different tetrahedralizations for this geometry, which are 
in particular not nested. We aim at computing the expectation 
of the solution \(u\) to the parametric diffusion equation 
\eqref{eq:modprob1} with right hand side $f\equiv 1$ and
random diffusion coefficient
\[
{{a}}({\bs y})=\bigg(\prod_{i=1}^6\frac 3 5 \big(2-y_i^2\big)\bigg)^{-1}.
\]
Since the diffusion coefficient is independent of the spatial variable, 
we can reformulate the equation according to
\[
-\Delta u({\bs y})=\prod_{i=1}^6\frac 3 5 \big(2-y_i^2\big)\text{ in }D,
\quad u({\bs y})=0\text{ on }\partial D,\quad {\bs y}\in{{\Gamma}}.
\]
Thus, since the Bochner integral interchanges with closed operators, 
see e.g.~\cite{HP57}, we obtain for the expectation of \(u\) the equation
\begin{equation}\label{eq:AnaEx}
-\Delta \E[u({\bs y})] = \E\bigg[\prod_{i=1}^6\frac 3 5 \big(2-y_i^2\big)\bigg]=1\text{ in }D,
\quad u({\bs y})=0\text{ on }\partial D,\quad {\bs y}\in{{\Gamma}}.
\end{equation}
Obviously, this equation is solved by $\E[u]({\bs x})=(1-\|{\bs x}\|_2)^2/6$.

\begin{figure}[htb]
\begin{center}
\scalebox{0.6}{
\includegraphics[width=0.35\textwidth,clip=true,trim=360 180 360 180]{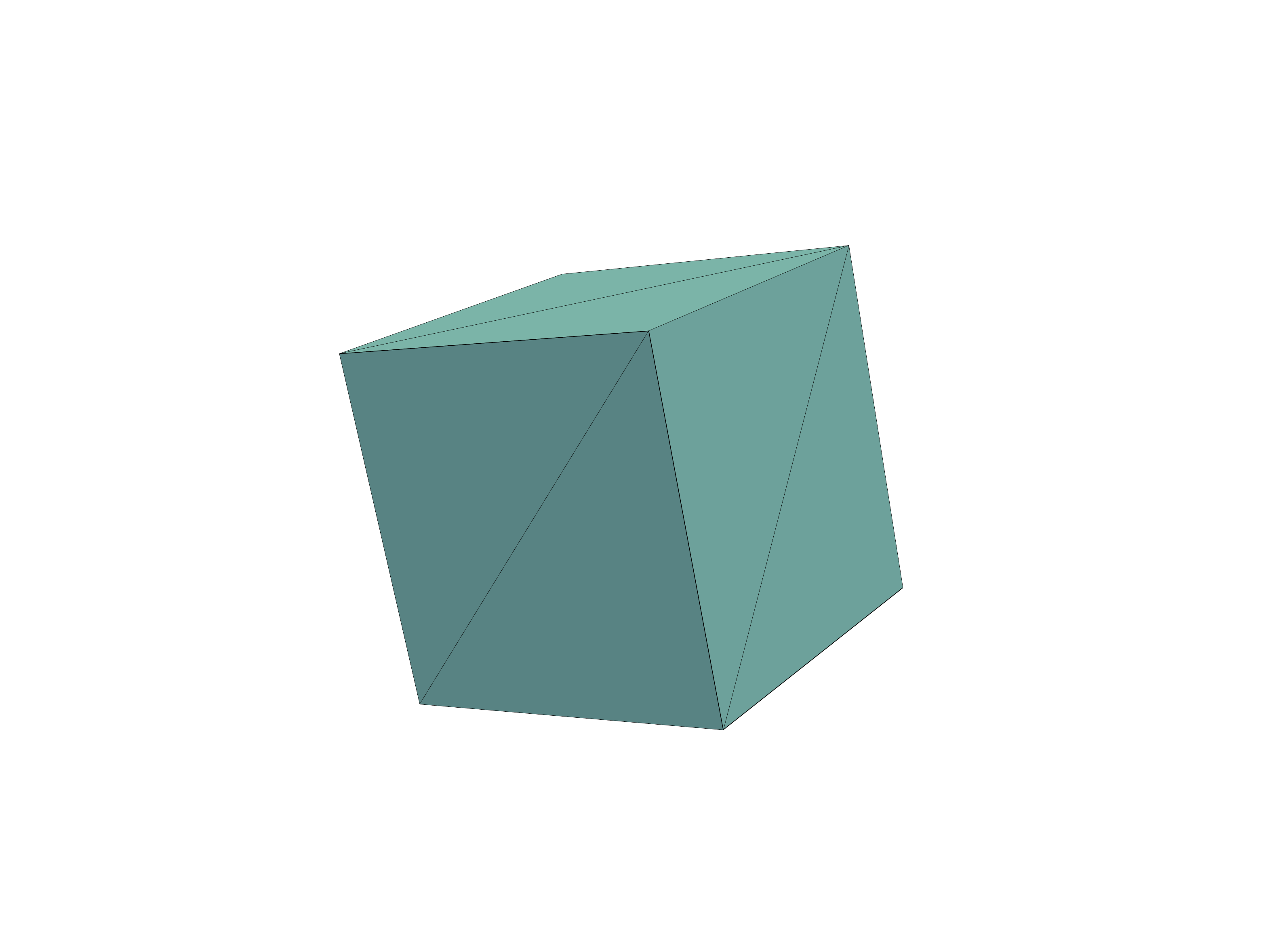}\quad
\includegraphics[width=0.35\textwidth,clip=true,trim=360 180 360 180]{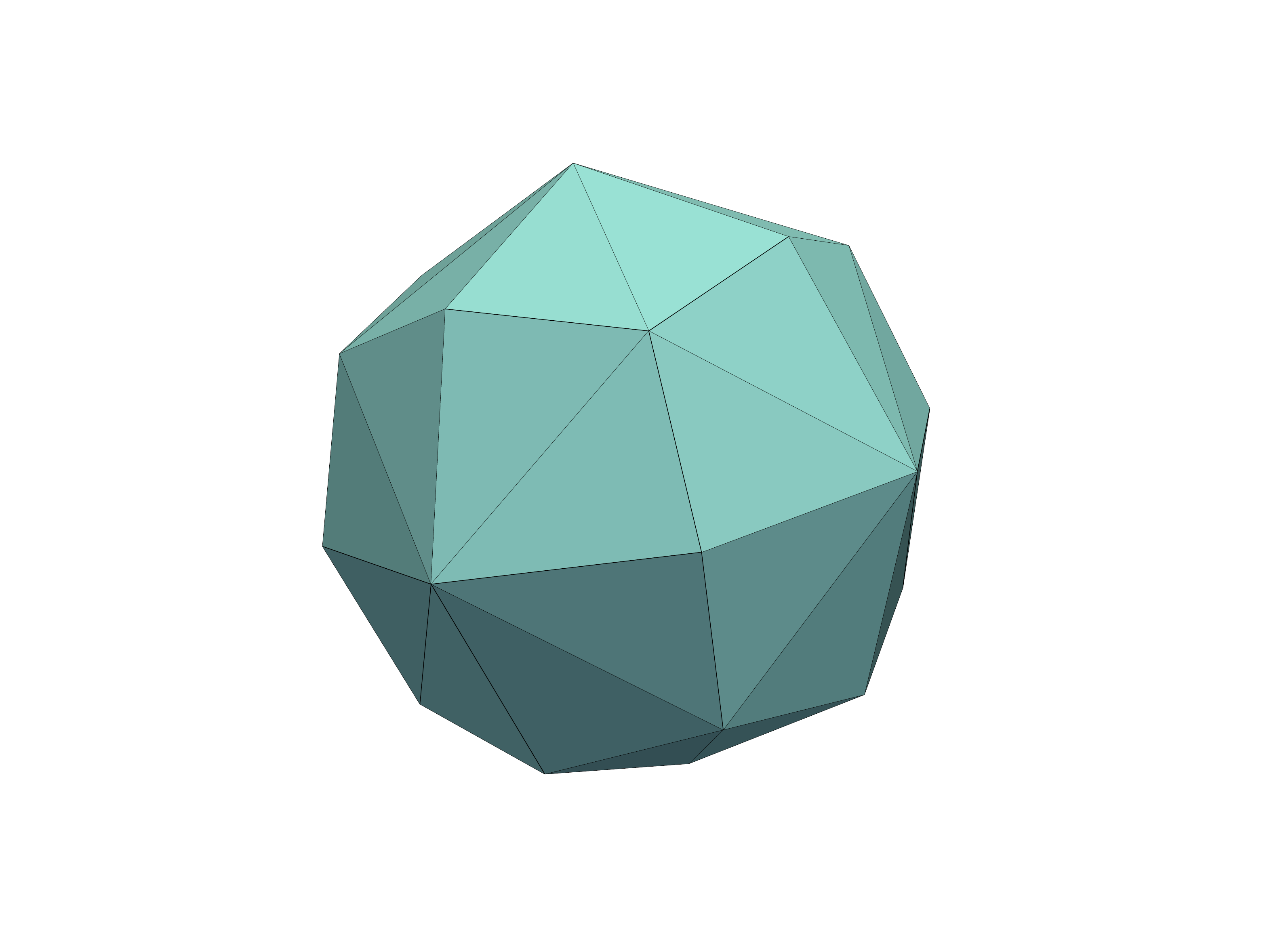}\quad
\includegraphics[width=0.35\textwidth,clip=true,trim=360 180 360 180]{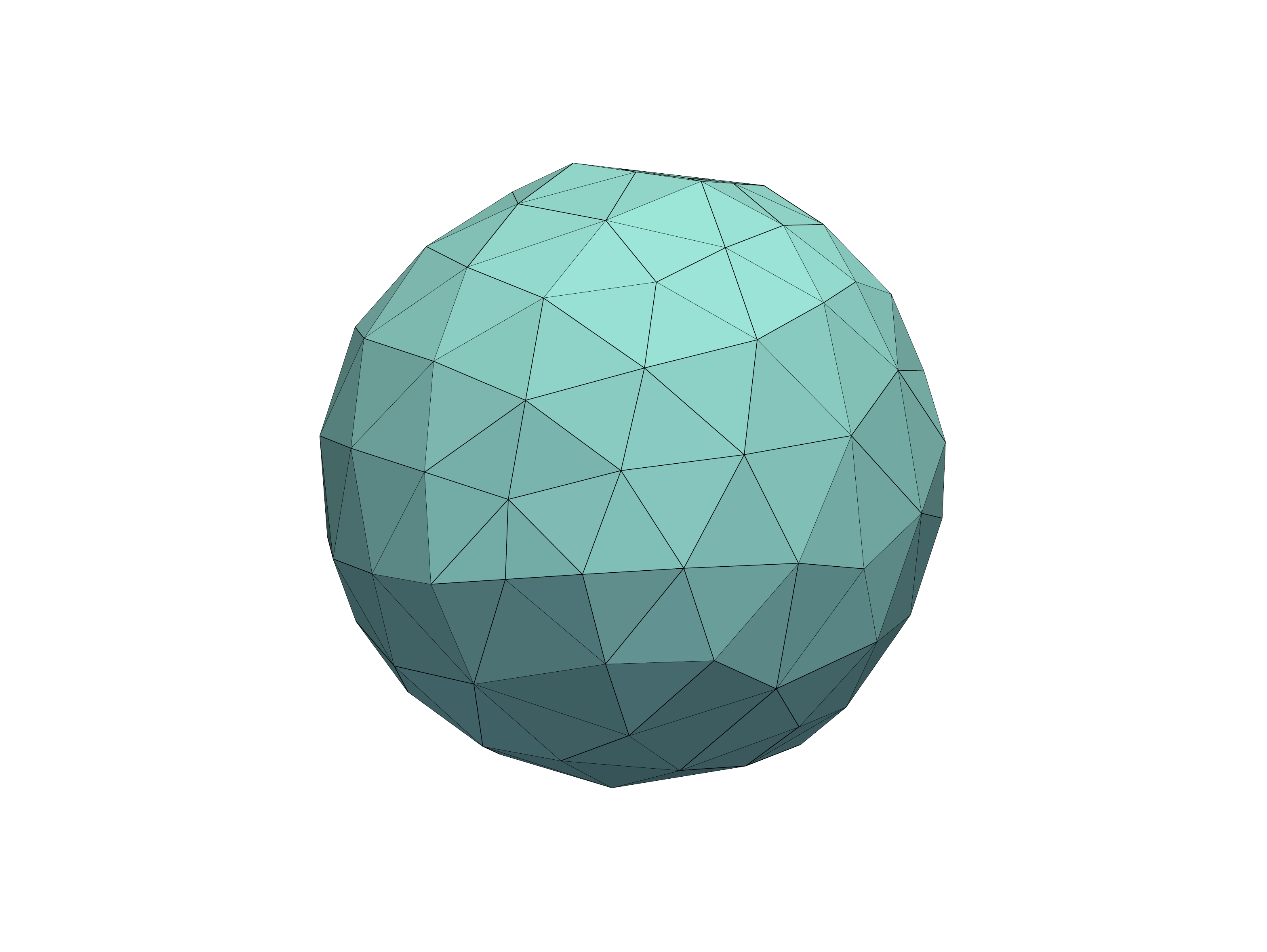}\quad
\includegraphics[width=0.35\textwidth,clip=true,trim=360 180 360 180]{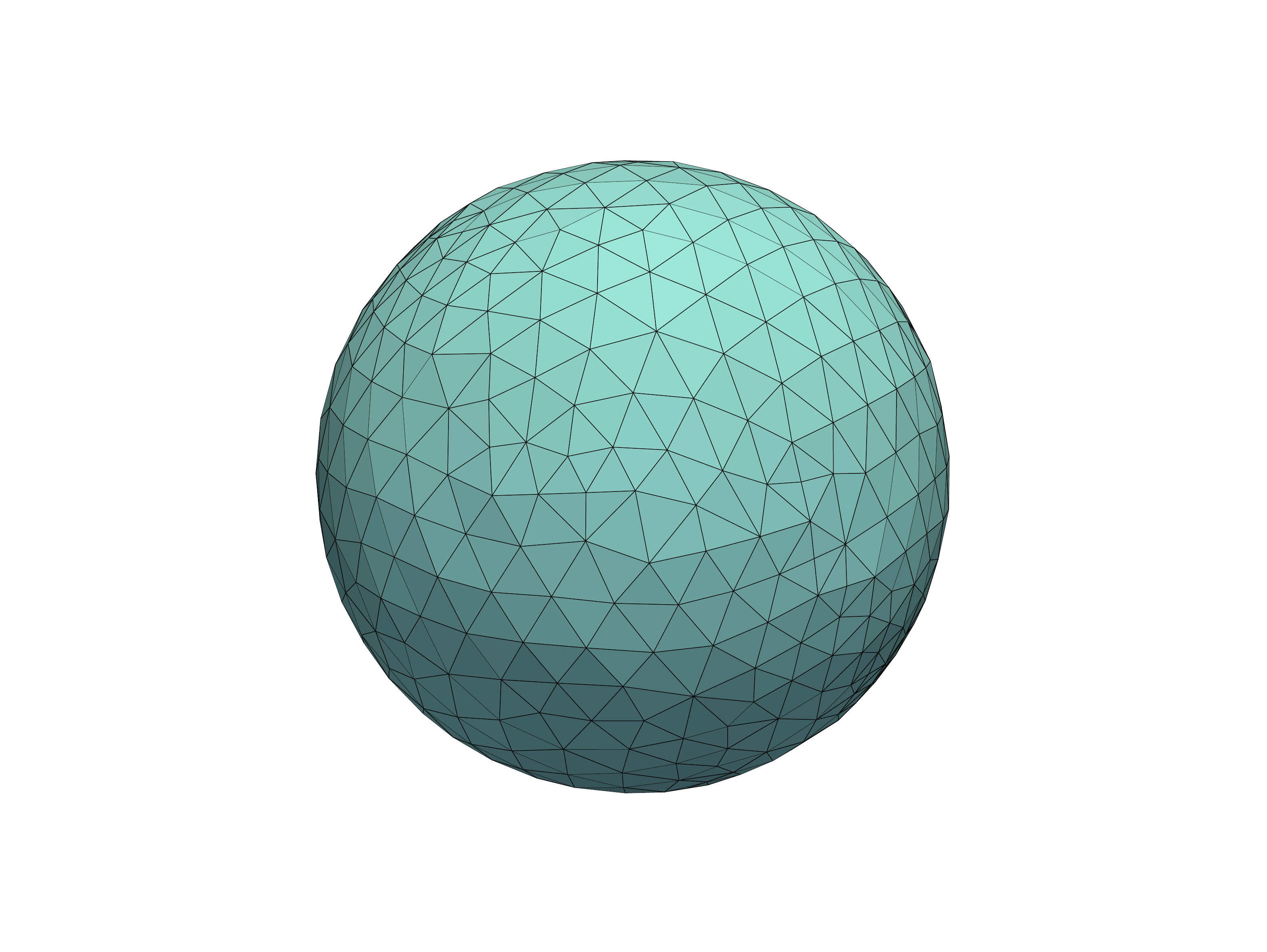}}
\caption{\label{fig:gridball}Tetrahedralizations 
of four different resolutions for the unit ball.}
\end{center}
\end{figure}

In order to measure the error to the approximate solution, 
we interpolate the exact solution to a mesh consisting of 
12\,047\,801 finite elements (this is level $j=8$). This involves 
a mesh size of \(h_8=0.0047\). For the levels \(j=0,\ldots,7\), the 
mesh sizes and corresponding degrees of freedom (DoF) are given in Table~\ref{table:meshSphere}.
Moreover, we chose \(N_0=10\) for the Monte Carlo quadrature 
and for the quasi-Monte Carlo quadrature and set 
\(N_\ell= 10\cdot 4^\ell\) and \(N_\ell=10\cdot 2^\ell\),
respectively.
For the MLMC, in order to approximate the root mean square error, 
we average five realizations of the related approximation error.
For the Clenshaw-Curtis quadrature, the number of samples are
chosen as if there would hold \(r=1\) in Lemma~\ref{lem:SGQerr}.\footnote{The Clenshaw-Curtis 
quadrature converges exponentially since the integrand is analytic.
The choice \(r=1\) is conservative and reflects the pre-asymptotic regime.}

\begin{table}[hbt]
\begin{center}
\begin{tabular}{|c||r|r|r|r|r|r|r|r|}\hline
\(\ell\) & 0 & 1 & 2 & 3 & 4 & 5 & 6 & 7\\
\hline
\(h_\ell\) & 1.2 & 0.6 & 0.3 & 0.15 & 0.075 & 0.0375 & 0.0188 & 0.0094\\ \hline
\(\operatorname{dof}_\ell\) & 8    &      27   &      244      &  1585    &    6042  &     29069   &   133376  &    551327    \\
\hline
\end{tabular}
\caption{\label{table:meshSphere}Mesh sizes and DoF on the different levels for the unit ball.}
\end{center}
\end{table}

\begin{figure}[htb]
\begin{center}
\includegraphics[width=0.45\textwidth,clip=true,trim=0 0 20 20]{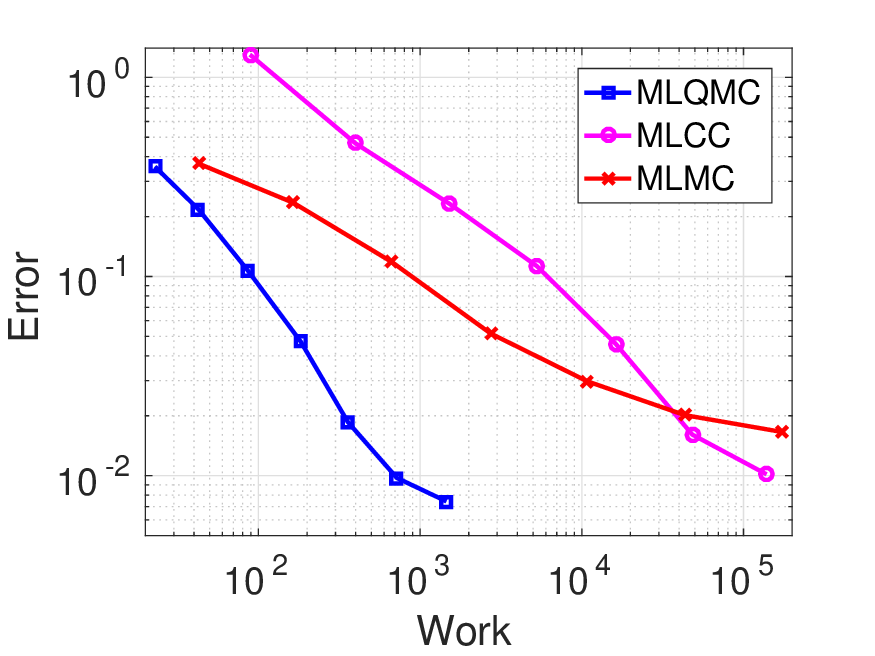}\qquad
\includegraphics[width=0.45\textwidth,clip=true,trim=0 0 40 20]{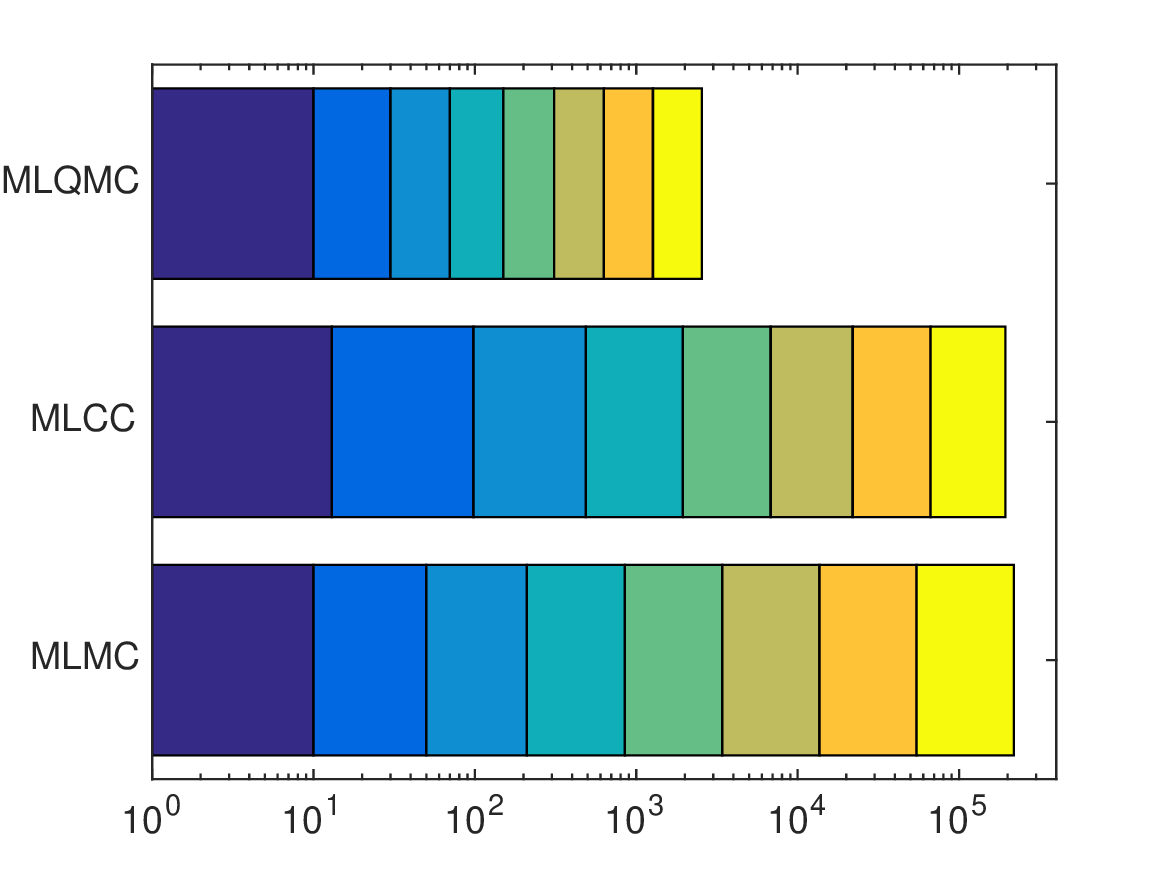}
\caption{\label{fig:ResSphere}\(H^1\)-errors of the different quadrature 
methods (left) and number of samples on each level in case of \(j=7\) (right) for the unit ball.}
\end{center}
\end{figure}

On the left side of Fig.~\ref{fig:ResSphere}, the error for the 
MLQMC, the MLCC and the MLMC is visualized. It is plotted 
against the work, which is expressed in terms of fine grid
samples: In accordance with the degrees of freedom denoted 
in Table~\ref{table:meshSphere}, we scale each sample on a 
particular level $\ell$ with the factor \(\operatorname{DoF}_{\ell}
/\operatorname{DoF}_j\), i.e.\ we weight a fine grid sample by 
$1$ and scale the coarse grid samples accordingly. The work 
is then given by summing up the total number of samples per 
level times the related weight.

It can be seen that MLQMC achieves the best error 
versus work rate. Moreover, the plot indicates that MLCC 
may asymptotically achieve a similar rate. MLMC seems
to provide here only a halved rate compared to MLQMC.
To give an insight on the number of samples spent on each 
particular level, we have depicted the corresponding numbers 
for \(j=7\) on the right hand side of Fig.~\ref{fig:ResSphere}.
It turns out that the quasi-Monte Carlo quadrature requires the 
smallest number of quadrature points. In contrast, the number 
of points for the Monte Carlo quadrature and for the Clenshaw-Curtis 
quadrature are nearly the same. This may be caused by the 
conservative choice for the number of quadrature points for 
the latter. Nevertheless, for fixed parameter dimension \(m\) 
and \(r=1\), we expect asymptotically similar rates for MLCC and MLQMC.

\subsection{A more complex example}
In our second example, the spatial domain is given by a 
model of the Zarya module of the International Space Station 
(ISS), which was the first module to be launched.\footnote{We 
thank Martin Siegel (Rheinbach, Germany) 
who kindly provided us with this model.}
Fig.~\ref{fig:grids} shows different tetrahedralizations of 
this geometry with decreasing mesh size. Note that the 
geometry can be imbedded into a cylinder with radius 
$0.52$ and height $1.58$. 

\begin{figure}[htb]
\begin{center}
\scalebox{0.6}{
\includegraphics[width=0.35\textwidth,clip=true,trim=260 125 230 150]{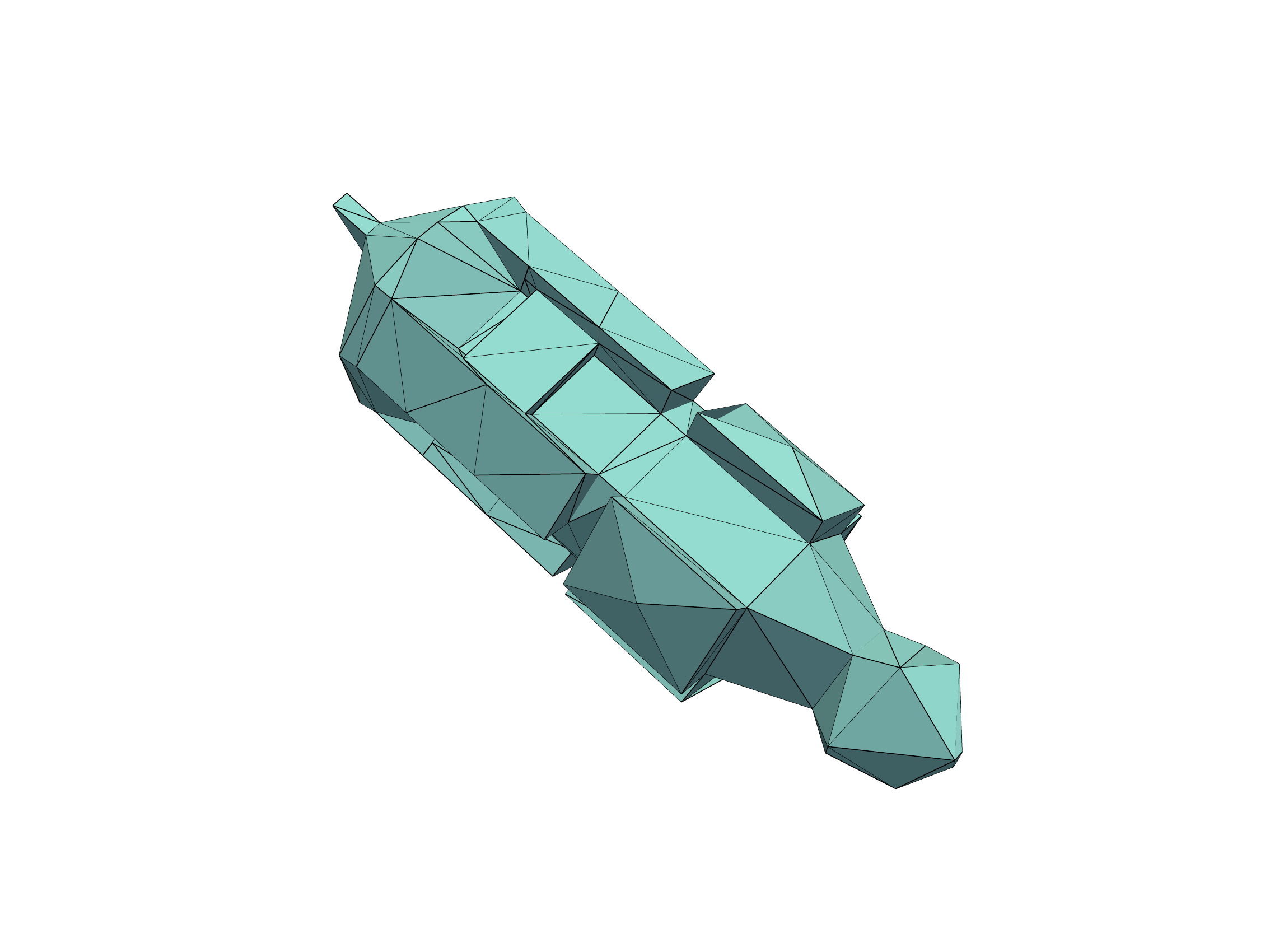}\quad
\includegraphics[width=0.35\textwidth,clip=true,trim=260 125 230 150]{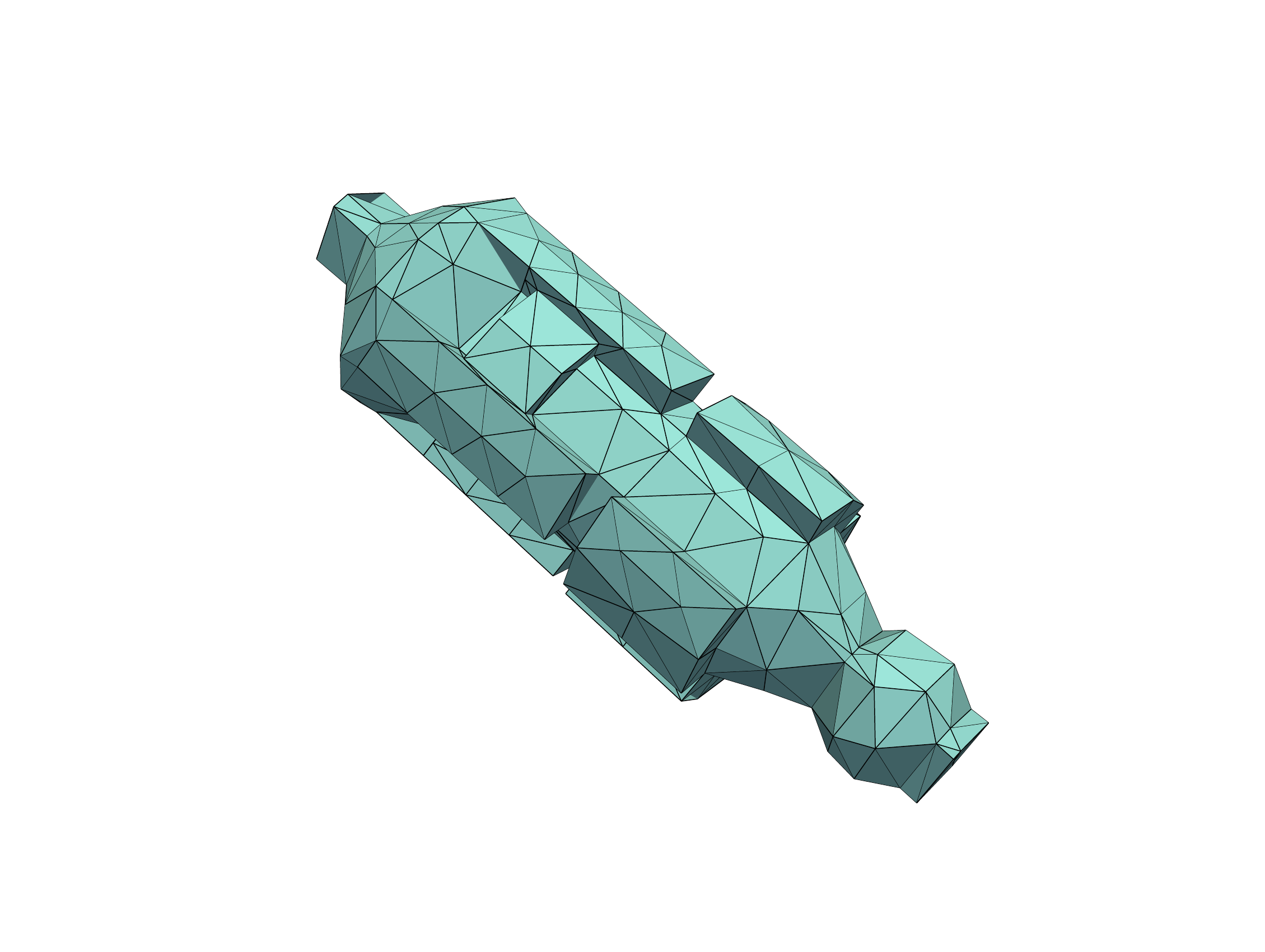}\quad
\includegraphics[width=0.35\textwidth,clip=true,trim=260 125 230 150]{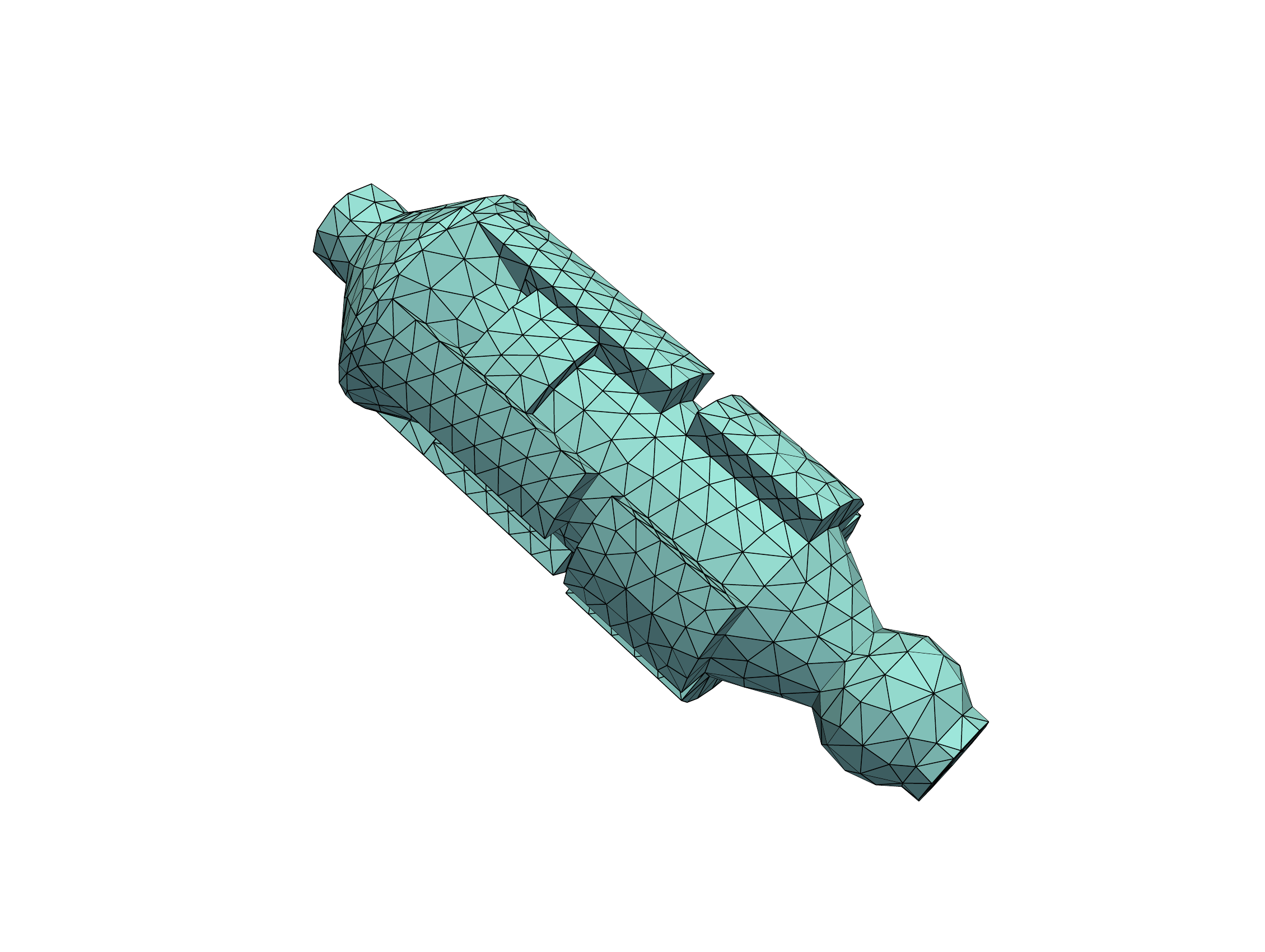}\quad
\includegraphics[width=0.35\textwidth,clip=true,trim=260 125 230 150]{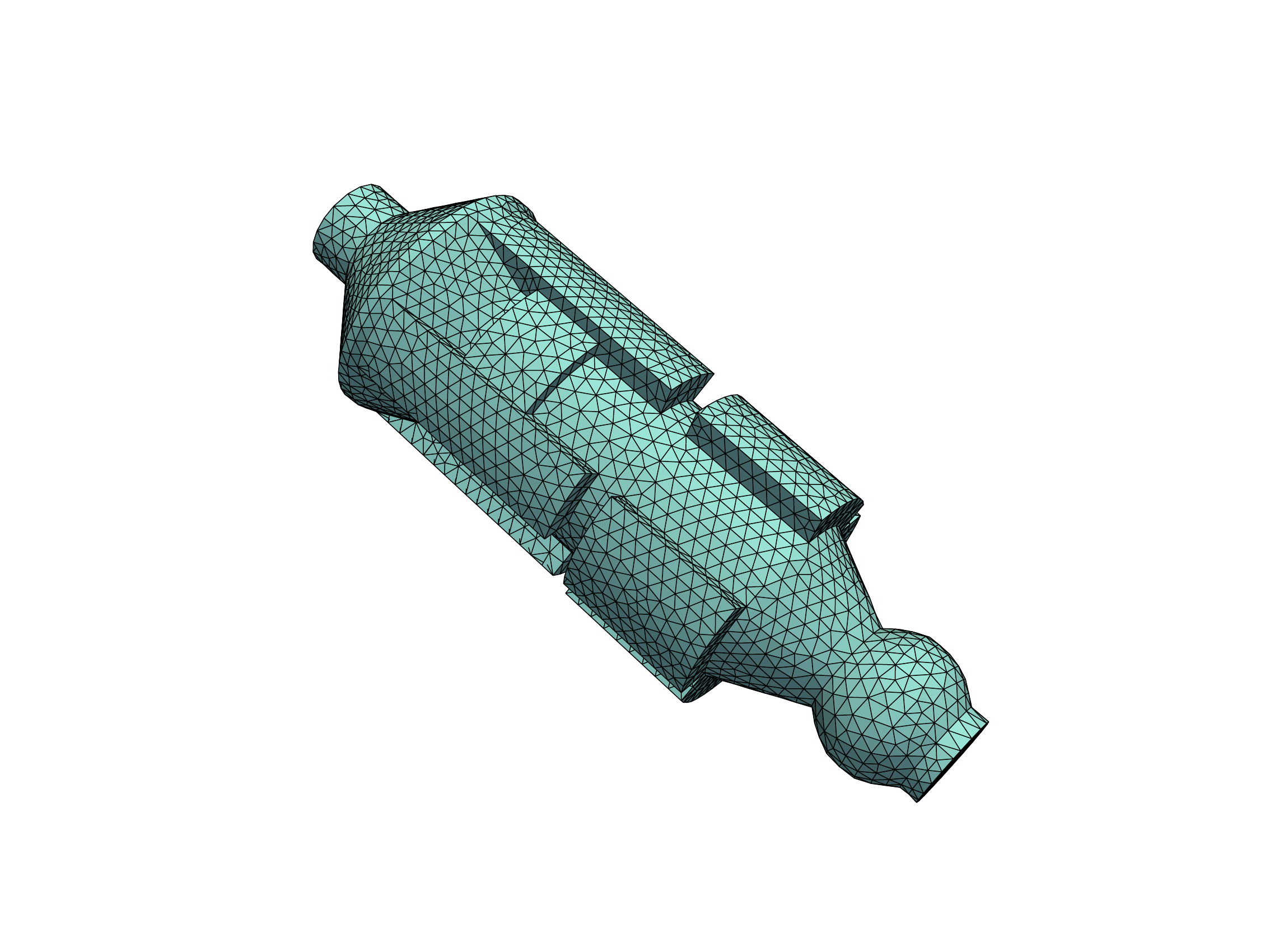}}
\caption{\label{fig:grids}Tetrahedralizations of four
different resolutions for the Zarya geometry.}
\end{center}
\end{figure}

\begin{figure}[htb]
\begin{center}
\includegraphics[width=0.35\textwidth,clip=true,trim=1000 300 900 400]{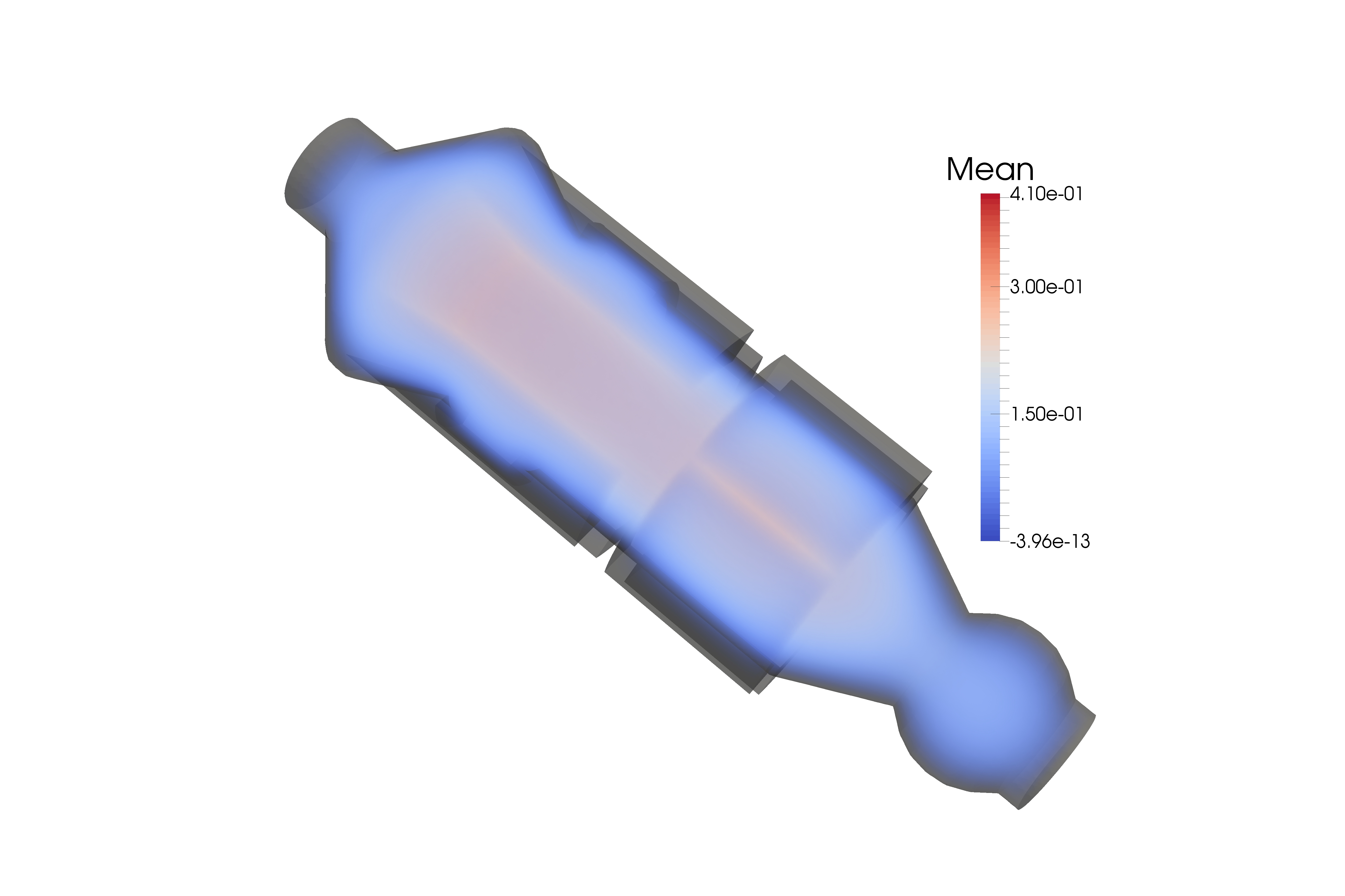}\qquad
\includegraphics[width=0.35\textwidth,clip=true,trim=1000 300 900 400]{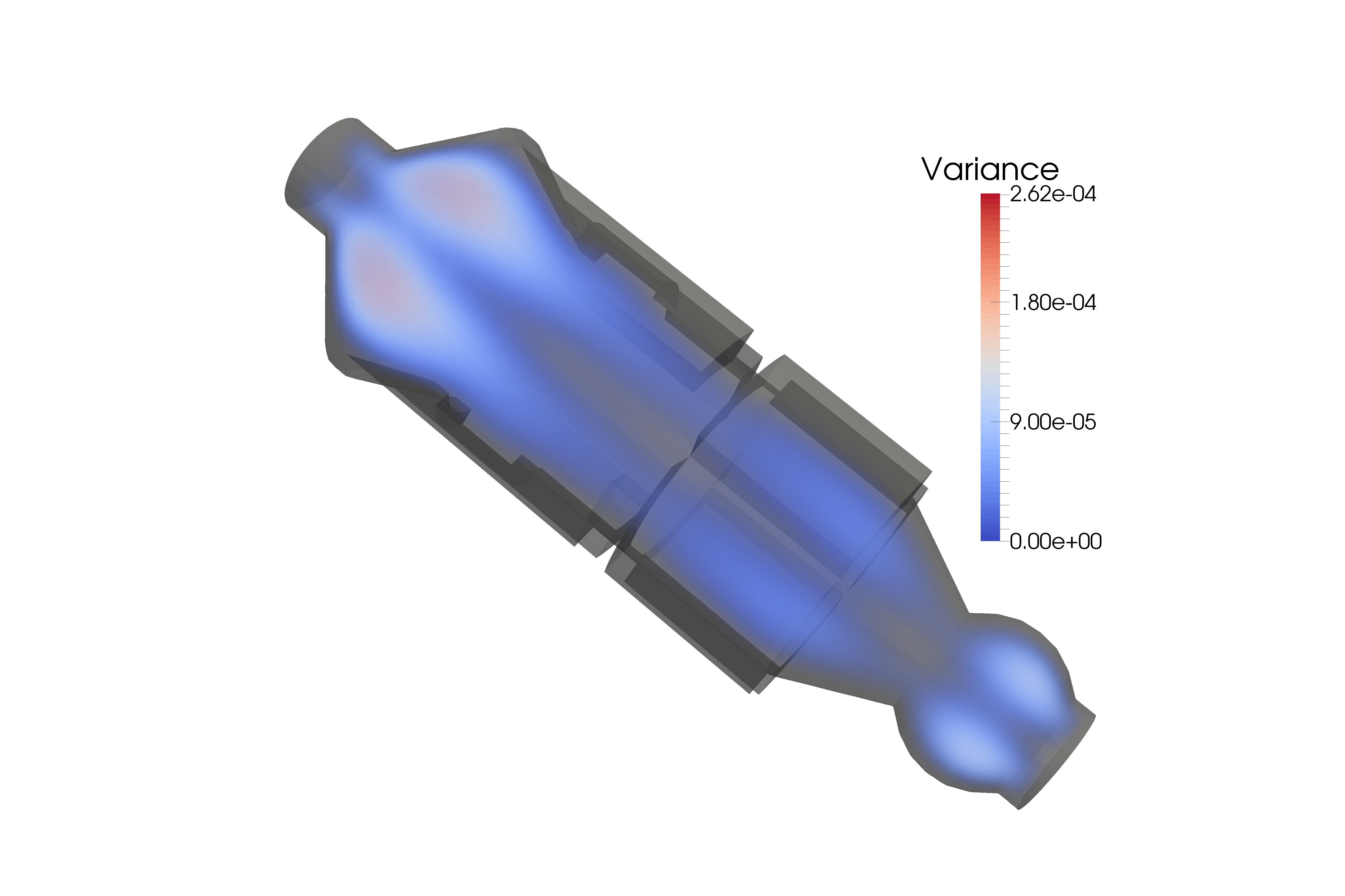}
\caption{\label{fig:ISS}Mean (left) and variance (right) of the model problem on
the Zarya geometry.}
\end{center}
\end{figure}

\begin{table}[hbt]
\begin{center}
\begin{tabular}{|c||r|r|r|r|r|r|r|}\hline
\(j\) & 0 & 1 & 2 & 3 & 4 & 5 & 6\\
\hline
\(h_j\) & 0.5 & 0.25 & 0.125 & 0.0625 & 0.0313 & 0.0156 & 0.0078\\ \hline
\(\operatorname{dof}_j\) & 174 & 333 & 1240 & 5846 & 30171 & 141029 & 617111 \\
\hline
\end{tabular}
\caption{\label{table:meshISS}Mesh sizes and DoF on the different levels for the Zarya geometry.}
\end{center}
\end{table}

In this example, the parametric diffusion coefficient 
is given by
\begin{align*}
{{a}}({\bs x},{\bs y})=1&+\frac{\exp({\|{\bs x}\|_2^2})}{20}\bigg(\sin(2\pi x_1)y_1+\frac 1 2 \sin(2\pi x_2)y_2+\frac 1 4 \sin(2\pi x_3)y_3\\
&+\frac 1 8 \sin(4\pi x_1)\sin(4\pi x_2)y_4+\frac{1}{16}\sin(4\pi x_1)\sin(4\pi x_3)y_5\\
&+\frac{1}{32}\sin(4\pi x_2)\sin(4\pi x_3)y_6\bigg)
\end{align*}
and \(f=10\). For \({\bs x}\in D\) and \({\bs y}\in{{\Gamma}}\), the 
diffusion coefficient varies approximately in the range \([0.19,1.81]\). 
Fig.\ \ref{fig:ISS} shows the mean (left) and the variance (right) of the 
reference solution. It has been computed on a mesh with 13\,069\,396 tetrahedrons
resulting in a mesh size of \(h=0.0039\) by 10\,000 quasi-Monte Carlo 
samples based on the Halton sequence.
For the levels \(j=0,\ldots,6\), the mesh sizes and corresponding DoF are given in Table~\ref{table:meshISS}.

Fig.~\ref{fig:ISSerr} visualizes the errors of the approximate
expectation and second moment for the different multilevel quadrature 
methods under consideration. 
The number of quadrature points 
for the presented methods are chosen as in the previous example. 
Again, MLQMC provides the best error versus work rate in the mean, as well as
in the second moment. The rates of MLMC and MLCC are both lower here.

\begin{figure}[htb]
\begin{center}
\includegraphics[width=0.46\textwidth,clip=true,trim= 0 0 20 20]{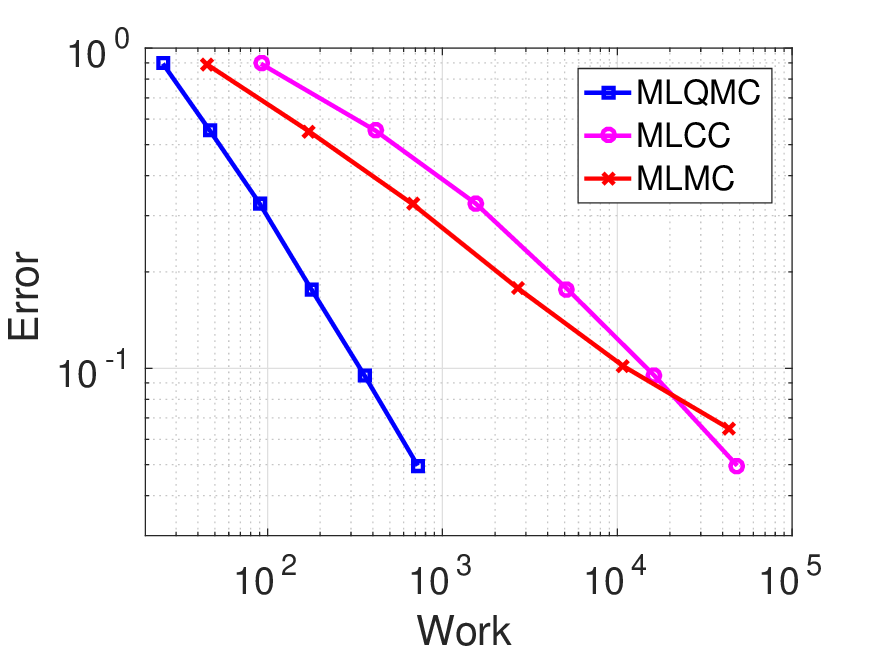}\qquad
\includegraphics[width=0.46\textwidth,clip=true,trim=0 0 20 20]{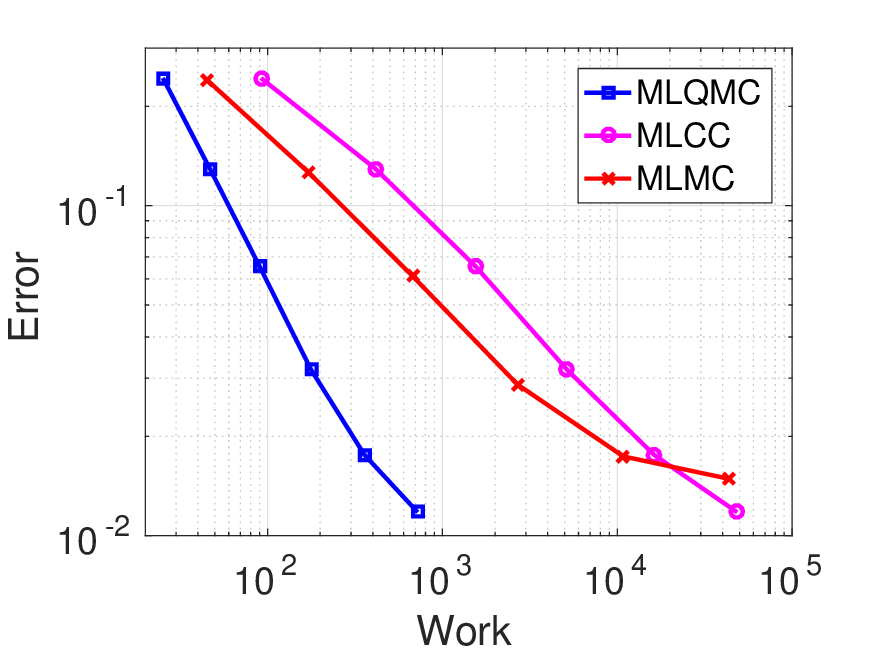}
\caption{\label{fig:ISSerr}\(H^1\)-errors of the approximate mean (left) and \(W^{1,1}\)-errors 
of the approximate second moment (right) on the Zarya geometry for different quadrature methods.}
\end{center}
\end{figure}

\section{Conclusion}
In the present article, we have reversed the construction of the
conventional multilevel quadrature. This enables us to give up the 
nestedness of the spatial approximation spaces. 
{In particular, a polygonal approximation of curved domain boundaries 
is sufficient for computing the finite element solution.} Hence, black-box 
finite element solvers can be directly applied to compute the solution of 
the underlying boundary value problem. 
Note that adaptively refined finite 
element meshes can be easily used as well.
Another aspect of our approach 
is that the cost is considerably reduced by the application of nested 
quadrature formulae. Both features have been demonstrated 
by numerical results for the Clenshaw-Curtis quadrature and 
the quasi-Monte Carlo quadrature based on Halton points. Of 
course, other nested quadrature formulae like the Gauss-Patterson 
quadrature can be used as well. The application of quadrature 
formulae which are tailored to a possible anisotropy of the integrand 
is also straightforward. If non-nested quadrature formulae are applied, 
one arrives at a combination-technique-like representation of the 
multilevel quadrature.

\bibliographystyle{plain}
\bibliography{bibl}
\end{document}